\setlist[enumerate, 1]{label={\textup{(\roman*)}}}
\setlist{leftmargin=2em, labelsep=.5em, labelwidth=1.5em, itemindent=0em}
\newcommand{\comment}[1]{}
\newtheorem{theorem}{Theorem}[section]
\newtheorem{prop}[theorem]{Proposition}
\newtheorem{lemma}[theorem]{Lemma}
\newtheorem{corollary}[theorem]{Corollary}
\theoremstyle{definition}
\newtheorem{definition}[theorem]{Definition}
\newtheorem{example}[theorem]{Example}
\def\uv#1{``#1''}
\let\zet\Z
\newcommand{\Q}{\mathbb{Q}}\let\kve\Q
\let\er\R
\let\ce\C
\let\phi\varphi
\def\restr#1#2{\left.#1\right|_{#2}}
\DeclareMathOperator{\disc}{disc}
\DeclareMathOperator{\Gal}{Gal}
\DeclareMathOperator{\tr}{Tr}\let\Tr\tr
\def\zav#1{\left(#1\right)}
\def\set#1{\left\{#1\right\}}
\def\abs#1{\left|#1\right|}
\def\Of{\mathcal{O}_F}
\def\Ol{\mathcal{O}_L}
\DeclareMathOperator{\spn}{span}
\newenvironment{diagram}[1][]{
    \displaymath\tikzpicture[line width=.6pt, n/.style = {inner sep=.3em}, rotate=45, #1]
}{
    \endtikzpicture\enddisplaymath
}
\title[Extending bounds on minimal ranks of universal lattices]{Extending bounds on minimal ranks of universal quadratic lattices to larger number fields}
\author{Mat\v ej Dole\v z\'alek}
\address{Charles University, Faculty of Mathematics and Physics, Department of Algebra, Sokolov\-sk\' a 83, 18600 Praha~8, Czech Republic}
\email{matej@gimli.ms.mff.cuni.cz}
\thanks{The author acknowledges support by Charles University project GAUK No. 134824.}
\subjclass[2020]{Primary 11E12;
    Secondary 11E20, 11R32, 11R80}
\keywords{universal quadratic lattice, totally real number field, Galois theory, Goursat's lemma}
\begin{document}

\begin{abstract}
There exist numerous results in the literature proving that within certain families of totally real number fields, the minimal rank of a universal quadratic lattice over such a field can be arbitrarily large. Kala introduced a technique of extending such results to larger fields -- e.g. from quadratic fields to fields of arbitrary even degree -- under some conditions. We present improvements to this technique by investigating the structure of subfields within composita of number fields, using basic Galois theory to translate this into a group-theoretic problem.
In particular, we show that if totally real number fields with minimal rank of a universal lattice $\geq r$ exist in degree $d$, then they also exist in degree $kd$ for all $k\geq3$.
\end{abstract}
\maketitle

\section{Introduction}
\label{sec:intro}

The quest to understand quadratic forms spans much of the history of number theory. This started with early results of Fermat, Lagrange and Legendre, who studied the expressibility of positive integers as sums of two, four and three squares respectively, and continued throughout the following centuries. A~particular interest has been directed at \emph{universal} quadratic forms, i.e. positive definite forms that represent all positive integers. This culminated with Bhargava and Hanke \cite{bhargava-hanke} finding all universal quaternary forms using their 290-theorem, which surprisingly states that a quadratic form is universal if and only if it represents all 29 of the so-called \emph{critical integers}, the largest of which is $290$.

While this provides a satisfying resolution over $\zet$, the situation is much less well-understood over number fields. Here, the usual setting is to consider a totally real number field and a form which attains totally positive values outside of the origin. A~\emph{universal} form is then one which represents all totally positive algebraic integers of the field. One may also slightly generalize quadratic forms to \emph{quadratic lattices}, this essentially corresponds to considering arbitrary finitely generated modules equipped with quadratic forms instead of only free modules (see Subsection~\ref{subsec:fields-and-lattices} for a precise definition).

Some of the early ventures into quadratic forms over number fields include the study of sums of squares over $\kve(\sqrt5)$ by Kirmse \cite{kirmse} and Götzky \cite{gotzky}, over $\kve(\sqrt2)$ and $\kve(\sqrt3)$ by Cohn \cite{cohn} and over arbitrary totally real number fields by Siegel \cite{siegel}, who proved that $\kve$ and $\kve(\sqrt5)$ are the only totally real fields where a sum of squares is universal. Surprisingly, while $\kve$ requires four squares to express every positive integer, over $\kve(\sqrt5)$ already the sum of three squares is universal (this is due to Maaß \cite{maass}). Related to this, the currently still open \emph{Kitaoka's conjecture} posits that there are only finitely many totally real number fields which admit a ternary universal form.

In recent years, indecomposable elements have been used extensively with regards to representation of algebraic integers by quadratic forms over totally real number fields: a totally positive algebraic integer is called \emph{indecomposable} if it may not be written as a sum of two totally positive algebraic integers. These have been use to prove partial results towards Kitaoka's conjecture or to determine which number fields admit a universal form with coefficients from $\zet$ (e.g. \cite{kala-yatsyna1, kala-yatsyna2, krasensky-tinkova-zemkova}).
For further background on techniques related to indecomposable elements as well as on the state of the art regarding Kitaoka's conjecture, we refer the reader to a survey paper by Kala \cite{kala-survey}.

The strand of recent results most pertinent to the present article are the ones which construct fields in a chosen degree such that all universal forms over such a field must have a high number of variables, or more generally such that a universal quadratic lattice must have high rank. Denoting by $m(F)$ the minimal rank of a universal lattice over a totally real number field $F$, fields with $m(F)\geq r$ for arbitrary $r$ are known to exist among quadratic fields \cite{blomer-kala, kalaquad}, cubic fields \cite{kala-tinkova, yatsyna}, biquadratic \cite{cech-et-al} and multiquadratic \cite{kala-svoboda} fields. In fact, later results of Kala, Yatsyna and \.Zmija \cite{kala-yatsyna-zmija} and Man \cite{man} respectively showed that almost all (in a sense of natural density) totally real quadratic and multiquadratic fields have $m(F)$ larger than a given $r$.

Building on this, Kala \cite{kalamain} introduced a technique for extending these results to higher the degrees. Starting from a suitable field $L$ with a high $m(L)$, one finds a field $K$ such that the structure of subfields in the compositum $KL$ is \uv{as simple as possible} (see Subsection~\ref{subsec:construction} for a precise statement). Then, if $\disc_K$ is sufficiently large, it can be shown that all universal lattices over $KL$ in fact contain a universal lattice over $L$, which yields $m(KL)\geq m(L)$, hence $m(KL)$ must also be large. With this, the results cited in the previous paragraph may be used to provide existence of totally real fields $F$ with high $m(F)$ in all degrees divisible by $2$ or $3$.

The original proof in \cite{kalamain} contains several technical assumptions, such as $L$ being Galois over $\kve$ with $\Gal(L/\kve)$ cyclic or the Galois closure $\tilde K$ of $K$ over $\kve$ having $\Gal(\tilde K/\kve)\simeq S_{[K:\kve]}$. Our purpose in this paper is to treat the technique more generally, remove or ease these technical assumptions as much as possible and, in doing so, make it readily applicable if new \uv{source results} appear in the future (e.g. a construction of $F$'s with high $m(F)$ in degree $5$, which our technique would immediately extend to all multiples of $5$) -- this is contained Corollary~\ref{cor:liftsk}. Our version of the main result is as follows (cf. \cite[Theorem 4]{kalamain}):
\begin{theorem}
	Let $k$, $\ell$ be positive integers and $L$ a totally real number field of degree $\ell$.
    Then for all totally real number fields $K$ of degree $k$ satisfying
    \begin{enumerate}
        \item $\disc_K$ is greater than $C(k,L)$ and coprime to $\disc_L$,
        \item $K$ has no proper subfields apart from $\kve$,
        \item $k\nmid\ell$ or $K/\kve$ is not Galois,
    \end{enumerate}
    we have $m(KL)\geq m(L)$, where $C(k,L)$ is a real constant dependent only on $k$ and $L$. As a consequence, for all $k\geq3$ there are infinitely many totally real number fields $F$ of degree $k\ell$ with $m(F)\geq m(L)$.
\end{theorem}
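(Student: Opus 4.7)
The plan is to split the theorem into two logically separate claims: first, the lattice inequality $m(KL) \geq m(L)$ under (i)--(iii), and second, the existence consequence for all $k \geq 3$. For the inequality, I would follow the trace-descent strategy of Kala \cite{kalamain}: starting from a universal quadratic $\mathcal{O}_{KL}$-lattice $\Lambda$, one associates an $\mathcal{O}_L$-lattice obtained via a trace-like map from $KL$ down to $L$ and argues that, under the rigidity supplied by (ii)--(iii), this descended lattice is again a universal $\mathcal{O}_L$-lattice, which immediately gives the inequality. Hypotheses (ii) and (iii) are exactly what makes the lattice of intermediate fields between $L$ and $KL$ trivial (only $L$ and $KL$ themselves) -- this is where the group-theoretic translation via Goursat's lemma, foreshadowed in the abstract, does its work. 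Hypothesis (i) absorbs the finitely many arithmetic exceptions caused by ramification in $K/\kve$ via the effective bound $C(k,L)$.

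Granting this, the existence consequence reduces to producing, for each fixed $k\geq 3$, infinitely many totally real $K$ of degree $k$ satisfying (i)--(iii); the coprimality of discriminants in (i) automatically forces $K\cap L=\kve$, hence $[KL:\kve]=k\ell$, and distinct $K$ give distinct fields $F=KL$. The key observation is that if the Galois closure $\tilde K/\kve$ has Galois group $S_k$ in its natural action on the $k$ embeddings of $K$, then the point-stabilizer $S_{k-1}$ is maximal in $S_k$, so by the Galois correspondence $K$ has no proper subfield other than $\kve$ (giving (ii)); and $|S_k|=k!>k$ for $k\geq 3$, so $K/\kve$ is not Galois (giving (iii), regardless of $\ell$). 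Thus conditions (ii)--(iii) come for free once one restricts to degree-$k$ $S_k$-fields with $k\geq 3$.

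It remains to exhibit infinitely many totally real $S_k$-fields of degree $k$ with $\disc_K>C(k,L)$ and coprime to $\disc_L$. This is a routine Hilbert irreducibility argument: take a parametric polynomial $f(x,t)\in\kve(t)[x]$ of degree $k$ with Galois group $S_k$ over $\kve(t)$, perturb it so that for all $t$ in some nonempty real interval all $k$ roots of $f(x,t)$ are real, then specialize $t$ to infinitely many integers in that interval belonging to appropriate congruence classes modulo the primes dividing $\disc_L$. The discriminants of the resulting fields $K_t$ grow without bound and can be kept coprime to $\disc_L$; throwing out the finitely many $t$ that violate (i) or give $\disc_{K_t}\leq C(k,L)$ still leaves infinitely many admissible $K$.

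The main obstacle is the first part: a careful Galois-theoretic classification of the subfields of the compositum $KL$ that sit above $L$, the translation of this classification into the language of subgroup lattices via Goursat's lemma, and the verification that the trace descent genuinely pulls a universal $\mathcal{O}_L$-lattice out of a universal $\mathcal{O}_{KL}$-lattice while the discriminant bound $C(k,L)$ absorbs the local contributions coming from primes dividing $\disc_K$. The existence step is, by comparison, a standard application of Hilbert irreducibility together with elementary Galois theory of $S_k$.
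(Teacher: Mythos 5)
Your high-level skeleton matches the paper's (reduce to a subfield-structure statement via Galois theory and Goursat, then use totally real $S_k$-fields of large discriminant for the existence part, noting that maximality of $S_{k-1}$ gives condition (ii) and non-normality/non-Galois gives (iii)). But the core of the first part is described in a way that does not match how the proof actually works, and the description hides the two ideas that make it go through. First, there is no trace map descending the lattice from $KL$ to $L$. The paper instead invokes the Chan--Oh generalization of the 290-theorem to fix a \emph{finite} set $a_1,\dots,a_n\in\mathcal{O}_L^+$ whose representation certifies universality over $L$, picks vectors $v_i$ in the universal $\mathcal{O}_{KL}$-lattice with $Q(v_i)=a_i$, and proves that the Gram entries $b_{ij}=2B(v_i,v_j)$ are forced to lie in $L$: if $M=\kve(b_{ij})\not\subset L$ then $M\supset K$ by the subfield dichotomy, and the chain Cauchy--Schwarz $\Rightarrow$ Schur's trace inequality $\Rightarrow$ the tower inequality $\abs{\disc_M}\geq\abs{\disc_K}^{[M:K]}$ bounds $\disc_K$ by an explicit function of $\tr_{L/\kve}(4a_ia_j)$. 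That is what $C(k,L)$ is --- an archimedean trace--discriminant threshold depending on the finitely many test elements $a_i$ --- not an absorption of ``ramification exceptions'' or ``local contributions at primes dividing $\disc_K$,'' as you describe it. Without the finiteness of the test set, no single constant $C(k,L)$ exists, so your plan as stated cannot produce the uniform bound the theorem asserts. Once all $b_{ij}\in L$, a Gram--Schmidt lemma shows $\spn_{\mathcal{O}_L}\{v_1,\dots,v_n\}$ is an $\mathcal{O}_L$-lattice of rank at most $r$ representing every $a_i$, hence universal; this is the descent. Relatedly, the subfield condition you need is not that the interval $[L,KL]$ is trivial, but that \emph{every} $\kve\subset M\subset KL$ satisfies $M\subset L$ or $M\supset K$ --- the fields $\kve(b_{ij})$ need not contain $L$, so triviality of $[L,KL]$ alone is insufficient.

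The existence part of your proposal is essentially sound in outline, but the ``routine Hilbert irreducibility argument'' glosses over the one delicate point: you need the \emph{field} discriminant of $K$ (not the polynomial discriminant of a specialization) to be coprime to $\disc_L$, which requires controlling ramification at the primes dividing $\disc_L$, e.g.\ by $p$-adic approximation at those primes. The paper sidesteps this by citing Kedlaya's theorem, which supplies infinitely many totally real degree-$k$ fields with $S_k$ Galois closure, discriminant exceeding any given bound, and discriminant avoiding any prescribed finite set of primes; the infinitude of distinct composita $KL$ then follows since $\disc_{KL}\geq\disc_K$ is unbounded.
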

We will prove this in Section~\ref{sec:main} as Theorem~\ref{thrm:main} and Corollary~\ref{cor:liftsk}.

\section*{Acknowledgments}
I wish to thank Vítězslav Kala for his advice and patience in supervising my bachelor's thesis, on which this article is based.

\section{Preliminaries}
\label{sec:preliminaries}

\subsection{Number fields and quadratic lattices}
\label{subsec:fields-and-lattices}

A~\emph{number field} is a field extension $F$ of the rational numbers $\kve$ of some finite degree $d$ and we denote its rings of algebraic integers as $\Of$. Such an $F$ is equipped with $d$ \emph{embeddings} $\sigma_1,\dots,\sigma_d:F\to\ce$; when all their images lie in $\er$, we say $F$ is \emph{totally real}. In that case we say an $\alpha\in F$ is \emph{totally positive}, if $\sigma(\alpha)>0$ for each $i$, and we denote this by $\alpha\succ0$. More generally, $\alpha\succ\beta$ denotes $\alpha-\beta\succ0$, whilst $\alpha\succeq\beta$ means $\alpha\succ\beta$ or $\alpha=\beta$, and $\Of^+$ denotes the set of totally positive elements of $\Of$.

We equip $F$ with a \emph{trace} and several notions of \emph{discriminant}. The trace is defined as $\Tr_{F/\kve}(\alpha) := \sum_{i=1}^d \sigma_i(\alpha)\in\kve$. We define the discriminant of a $d$-tuple of elements $\alpha_1,\dots,\alpha_d\in F$ as
\[
    \Delta_{F/\kve}(\alpha_1,\dots,\alpha_d) := \det(\Tr_{F/\kve}(\alpha_i\alpha_j))_{i,j=1}^d = \zav{\det(\sigma_i(\alpha_j))_{i,j=1}^d}^2,
\]
and for a single $\alpha\in F$, we define $\Delta_{F/\kve}(\alpha):=\Delta_{F/\kve}(1,\alpha,\dots,\alpha^{d-1})$. If $\omega_1,\dots,\omega_d$ is a $\zet$-basis of $\Of$, we define the discriminant of $F$ as $\disc_F:=\Delta_{F/\kve}(\omega_1,\dots,\omega_d)$; this is independent of the particular choice of basis.

A~\emph{quadratic $\Of$-lattice} (or $\Of$-lattice for short) \emph{of rank $r$} consists of a pair $(\Lambda,Q)$, where $\Lambda$ is a finitely generated $\Of$-submodule of $F^r$ which spans $F^r$ as an $F$-vector space and $Q$ is a quadratic form on $F^r$ which attains values from $\Of$ on $\Lambda$. Over a totally real $F$ we say $(\Lambda,Q)$ is \emph{totally positive}, if $Q$ is totally positive definite, i.e. $Q(v)\succ 0$ for all $v\in F^r\setminus\set0$. An $\Of$-lattice $(\Lambda,Q)$ is said to \emph{represent} an element $\alpha\in \Of$ if $Q(v)=\alpha$ for some $v\in\Lambda$, and a totally positive $\Of$-lattice is \emph{universal} if it represents all elements of $\Of^+$.
For each totally real number field $F$, we denote by $m(F)$ the minimal rank of a totally positive universal $\Of$-lattice.
Note that any quadratic form over $\Of$ may be realized as an $\Of$-lattice by taking $\Lambda$ to be the free module $\Of^r$. In general, the minimal rank of a totally positive universal quadratic {form} may differ from $m(F)$, which considers all {lattices}.

The following theorem generalizes the famous 290-theorem of Bhargava and Hanke \cite{bhargava-hanke} to the context of totally real number fields. It will allow us to pleasantly simplify some technical details in the proof of the main theorem in Section~\ref{sec:main}, since it gives us the assurance that the universality of a totally positive lattice is witnessed by a finite tuple of elements.
\begin{theorem}[{\cite[Corollary 5.8]{chan-oh}}]
	\label{thrm:290}
	For a totally real number field $F$, there exists a finite set $S\subset\Of^+$ such that a totally positive $\Of$-lattice is universal if and only if it represents all elements of $S$.
\end{theorem}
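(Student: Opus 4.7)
The plan is to follow the philosophy of the Bhargava--Hanke $290$-theorem, adapted from $\zet$ to the ring of integers $\Of$ of a totally real field. The core intuition is that a totally positive $\Of$-lattice $(\Lambda,Q)$ represents ``most'' elements of $\Of^+$ for local--global reasons, so failure of universality can be detected by a bounded amount of data. The strategy is therefore to \emph{locate} the finitely many possible ``critical'' obstructions and package them into the set $S$.

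Concretely, I would first invoke the theory of genera of quadratic lattices over $\Of$. Given a totally positive $\Of$-lattice $\Lambda$, its genus splits into finitely many isometry classes, and Siegel-type formulas (together with the Hasse principle for ternary and higher representations) imply that every $\alpha\in\Of^+$ which is represented locally at each completion $F_v$ and whose totally positive size is sufficiently large is in fact represented globally by $\Lambda$. Here ``size'' must be measured by something like a norm or a product of embeddings, because the infinite unit group prevents an honest well-ordering of $\Of^+$; this is the point where the totally real hypothesis on $F$ is used crucially, since it ensures a good notion of ``totally positive and bounded.'' The local representability conditions at each finite place $v$ depend on $\alpha$ only modulo a fixed power of $\mathfrak p_v$ and are vacuous at almost all $v$, so for each candidate genus they amount to finitely many congruence conditions.

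Second, I would deploy a Bhargava-style escalation argument: starting from the zero lattice, iteratively adjoin a basis vector representing a chosen minimal (in some totally positive sense) missing totally positive integer. One needs to prove that the resulting escalator tree has bounded depth, hence finitely many leaves, and that every leaf is either universal or fails to represent some element of bounded size. The totally positive algebraic integers appearing along the tree, together with the finite-order-of-magnitude exceptions coming from the local--global step, can then be collected into a single finite set $S\subset\Of^+$ whose representation forces universality.

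The main obstacle, and the reason this is a substantial result of Chan--Oh rather than a short argument, is the quantitative step: one must bound the size of potential exceptions to the local--global principle uniformly over all totally positive $\Of$-lattices of a given rank, not just for a single lattice. Handling the infinitely many unit-translates of any given $\alpha$, getting effective Siegel-type bounds over $F$, and controlling the escalation process in the presence of genera of growing class number are the places where the totally real hypothesis and the full machinery of integral quadratic forms over Dedekind domains are needed. Since the theorem is invoked only as a black box in what follows, I would simply cite \cite{chan-oh} for these inputs.
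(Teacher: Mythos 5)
The paper does not prove this statement at all: it is imported verbatim from \cite[Corollary 5.8]{chan-oh} and used purely as a black box (the only ``consequence'' drawn in the text is the trivial one that $m(F)\leq\#S$). Since your proposal also ends by deferring every substantive input to \cite{chan-oh}, at the level of what is actually established the two treatments coincide --- both are citations --- and there is nothing in the paper to compare your argument against.

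As a sketch of how the cited result is actually obtained, one remark is worth making. You frame the central difficulty as a \emph{quantitative} one: uniform, effective Siegel-type bounds on exceptions to the local--global principle over $F$, controlling unit orbits, etc. But the theorem as stated (and as used in this paper) asserts only the \emph{existence} of a finite criterion set $S$, with no effectivity and no analogue of the explicit constant $290$; the known proofs over totally real fields (going back to Kim--Kim--Oh and generalized by Chan--Oh) obtain this by an essentially combinatorial finiteness argument on escalation lattices and are ineffective --- they neither require nor produce the uniform analytic bounds you identify as the main obstacle. So your outline describes a harder theorem than the one being invoked; for the existence statement, the escalation-tree finiteness is the real content and the local--global quantification can be avoided. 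None of this affects the present paper, which is entitled to cite the result as it does.
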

As a trivial corollary, the theorem implies that $m(F)$ is finite for every $F$, since the quadratic form $\sum_{s\in S}sx_s^2$ must be universal, meaning that $m(F)\leq\# S$.

Lastly, let us state the following technical result. It guarantees the existence of totally real number fields with properties that will be useful in Section~\ref{sec:main} for controlling the structure of subfields of composita. For brevity, let us fix the convention that unless stated otherwise, whenever we talk about a field being Galois or its Galois closure, we always mean these over $\kve$.

\begin{theorem}[{\cite[Corollary 1.3]{kedlaya}}]
	\label{thrm:kedlaya}
	Let $k\geq2$ be a positive integer, $S$ a finite set of primes and $C>0$ a real constant. Then there exist infinitely many totally real number fields $K$ of degree $k$ such that:
	\begin{itemize}
		\item $\disc_K$ is not divisible by any $p\in S$,
		\item $\disc_K>C$,
		\item the Galois closure $\tilde K$ of $K$ has the Galois group $\Gal(\tilde K/\kve)\simeq S_k$.
	\end{itemize}
\end{theorem}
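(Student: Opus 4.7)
My plan is to exhibit, for each $C$ and finite $S$, an infinite family of degree-$k$ polynomials in $\zet[x]$ whose splitting fields realize all three conditions of the statement. A convenient shape is $f(x) = \prod_{i=1}^k (x - a_i) + \varepsilon(x)$, where $a_1 < \cdots < a_k$ are widely spaced integers and $\varepsilon(x) \in \zet[x]$ has degree at most $k-1$ with coefficients small relative to the gaps $a_{i+1}-a_i$. The unperturbed polynomial $\prod(x-a_i)$ has $k$ distinct real roots, so continuity of roots with respect to coefficients ensures $f$ still has $k$ real roots; consequently $K = \kve(\theta)$ is totally real of degree $k$ whenever $f$ is irreducible.

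To force $\Gal(\tilde K/\kve) \simeq S_k$, I would impose congruence conditions on the parameters ensuring that the Frobenius elements at two auxiliary primes $q_1, q_2$ outside $S$ have cycle types jointly generating $S_k$: for instance, $f$ irreducible modulo $q_1$ (contributing a $k$-cycle) and $f$ factoring as an irreducible quadratic times linear factors modulo $q_2$ (contributing a transposition whose involved roots can be arranged, by further congruence conditions, so as to generate $S_k$ together with the $k$-cycle). Combined via the Chinese remainder theorem with the further requirement that $p \nmid \disc(f)$ for each $p \in S$ — a condition carving out a nonempty union of residue classes, since $\disc(f) = \pm\mathrm{Res}(f,f')$ is a nonzero polynomial in the parameters — this yields an admissible residue system for $(a_1,\dots,a_k,\varepsilon)$.

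Choosing the $a_i$'s of arbitrarily large magnitude within these residue classes then makes $|\disc(f)|$ exceed any prescribed bound $C$ and yields infinitely many non-isomorphic fields $K$; since $\disc(f) = [\Of : \zet[\theta]]^2 \cdot \disc_K$, the $S$-coprimality and size bound transfer from $\disc(f)$ to $\disc_K$.

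The main obstacle I expect is the compatibility of the congruence conditions forcing the $S_k$-Galois structure with both the $S$-avoidance conditions and the smallness constraint on $\varepsilon$ required to preserve total reality. This can be navigated by first fixing the auxiliary Galois and $S$-coprimality congruences (a finite and independent datum) and then choosing the magnitudes of the $a_i$ within those residue classes as large as needed; since the perturbation bound for total reality is mild relative to the spacing of the $a_i$, all three families of conditions are simultaneously satisfiable in an infinite set of parameter tuples, yielding infinitely many fields $K$ of the kind demanded.
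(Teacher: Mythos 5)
The paper does not prove this statement at all: it is quoted directly from Kedlaya (Corollary 1.3 of the cited paper), so there is no internal proof to compare against. Judged on its own terms, your construction follows the classical van der Waerden/Kedlaya template, but it has two genuine gaps. The more serious one is the final transfer step: from $\disc(f) = [\Of:\zet[\theta]]^2\cdot\disc_K$ you can indeed conclude that $p\nmid\disc(f)$ implies $p\nmid\disc_K$, but the inequality goes the wrong way for the size bound --- $\abs{\disc_K}$ \emph{divides} $\abs{\disc(f)}$, so making $\abs{\disc(f)}$ huge says nothing about $\abs{\disc_K}$ exceeding $C$, since the index $[\Of:\zet[\theta]]$ is uncontrolled in your setup. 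This is precisely the difficulty that Kedlaya's actual construction is designed to kill: he arranges $\disc(f)$ to be \emph{squarefree}, which forces the index to be $1$, hence $\disc_K=\disc(f)$ exactly, and then both the coprimality to $S$ and the growth of $\abs{\disc_K}$ follow at once. Without that (or some other control of the index), you would need to invoke the Hermite--Minkowski theorem that only finitely many fields of degree $k$ have $\abs{\disc_K}\leq C$ --- which in turn requires you to first know that your family contains infinitely many \emph{non-isomorphic} fields. That is the second gap: you assert that large, widely spaced $a_i$ yield infinitely many distinct fields, but a single field $K$ of degree $k$ contains (via the Minkowski embedding of $\Of$ as a full lattice in $\R^k$) plenty of algebraic integers with conjugates near any prescribed widely spaced tuple, so distinct polynomials in your family could a priori keep generating the same field. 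A standard repair is to impose, for each new member of the family, total ramification (an Eisenstein condition) at a fresh auxiliary prime, so that the discriminants acquire pairwise distinct prime divisors.

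A smaller issue: a $k$-cycle together with an arbitrary transposition generates $S_k$ only when the transposition is suitably positioned relative to the cycle (e.g.\ always for $k$ prime, but not for general composite $k$), and the factorization type of $f$ modulo $q_2$ only tells you the cycle type of Frobenius, not which transposition you get; your phrase about arranging the involved roots by further congruences does not obviously translate into conditions on $f \bmod q_2$. The standard remedy is a third auxiliary prime at which $f$ factors as a linear times an irreducible of degree $k-1$: transitivity plus a $(k-1)$-cycle gives double transitivity, hence primitivity, and a primitive group containing a transposition is $S_k$ by Jordan's theorem. With these three repairs your outline becomes a correct, essentially self-contained proof of the cited result, at the cost of re-deriving what the paper simply imports as a black box.
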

Note that this is a special case of the \emph{inverse Galois problem}, which asks which finite groups can occur as Galois groups of finite Galois extensions of a given field. In general, this question is open (see e.g. \cite{jensen-ledet-yui} for more context).

\subsection{Inequalities of traces and discriminants}

A~series of inequalities comparing various traces and discriminants of elements in number fields will be crucial to the argument in Section~\ref{sec:main}. Let us formulate the inequalities we will need here.

\begin{prop}[Cauchy-Schwarz inequality applied to totally real number fields]\label{prop:cauchy}
    Let $(\Lambda,Q)$ be a totally positive quadratic $\Of$-lattice with a corresponding bilinear form $B$ over a totally real number field $F$. Then \[Q(u)Q(v)\succeq B(u,v)^2\] for all $u,v\in\Lambda$. In particular, this implies $\tr_{F/\Q}Q(u)Q(v)\geq\tr_{F/\Q} B(u,v)^2$.
\end{prop}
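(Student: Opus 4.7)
The approach is to push everything through each of the $d$ real embeddings $\sigma_1,\dots,\sigma_d$ of $F$ and invoke the classical Cauchy-Schwarz inequality for positive definite real quadratic forms. First I would fix one embedding $\sigma_i$ and form the real quadratic form $Q^{\sigma_i}$ on $\er^r$ obtained by applying $\sigma_i$ to the coefficients of $Q$, together with its associated symmetric bilinear form $B^{\sigma_i}$. Extending $\sigma_i$ to $F^r\to\er^r$ coordinate-wise, one has $Q^{\sigma_i}(\sigma_i(w))=\sigma_i(Q(w))$ and $B^{\sigma_i}(\sigma_i(u),\sigma_i(v))=\sigma_i(B(u,v))$ for all $w,u,v\in F^r$.

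Next I would upgrade $Q^{\sigma_i}$ from being positive merely on the image of $F^r$ to being positive definite on all of $\er^r$. Diagonalising $Q$ over $F$ (possible since $F$ has characteristic $0$) writes $Q(y)=\sum_k a_k y_k^2$ in suitable coordinates with $a_k\in F$, and total positivity of $Q$ forces $\sigma_j(a_k)\succ0$ in the sense $\sigma_j(a_k)>0$ for every $j$ and $k$; in particular $Q^{\sigma_i}(x)=\sum_k\sigma_i(a_k)x_k^2$ is then manifestly positive definite on $\er^r$. The ordinary Cauchy-Schwarz inequality for real positive definite forms gives
\[
	Q^{\sigma_i}(\sigma_i(u))\,Q^{\sigma_i}(\sigma_i(v))\geq B^{\sigma_i}(\sigma_i(u),\sigma_i(v))^2,
\]
which by the compatibilities above rewrites as $\sigma_i(Q(u)Q(v))\geq\sigma_i(B(u,v)^2)$. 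Since this holds for every $i=1,\dots,d$, the element $Q(u)Q(v)-B(u,v)^2$ is sent to a non-negative real number by each embedding of $F$, which is precisely the meaning of $Q(u)Q(v)\succeq B(u,v)^2$.

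The trace inequality is then immediate: summing the non-negative real numbers $\sigma_i(Q(u)Q(v)-B(u,v)^2)$ over $i=1,\dots,d$ yields $\tr_{F/\Q}(Q(u)Q(v)-B(u,v)^2)\geq0$, as required. The only genuinely nontrivial step is the upgrade from pointwise positivity of $Q^{\sigma_i}$ on the $F$-points to positive definiteness on all of $\er^r$, which the diagonalisation argument handles cleanly; beyond this, the proposition is pure transport of the real Cauchy-Schwarz inequality along embeddings.
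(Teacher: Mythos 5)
The paper states this proposition without proof, so there is no argument of its own to compare against; your proof is correct and is the standard way to establish the fact. Two remarks. First, the step you single out as the only nontrivial one---upgrading $Q^{\sigma_i}$ to a positive definite form on all of $\er^r$---can be bypassed entirely: Cauchy--Schwarz for the pair $\sigma_i(u),\sigma_i(v)$ only requires $Q^{\sigma_i}\bigl(s\sigma_i(u)+t\sigma_i(v)\bigr)\geq 0$ for all real $s,t$, which follows from total positivity of $Q$ on $F$-points by density of $\Q$ in $\er$ and continuity; alternatively, one can avoid embeddings altogether by substituting $t=-B(u,v)/Q(u)$ into $Q(tu+v)\succeq 0$ and multiplying through by the totally positive (hence invertible) $Q(u)$, handling $u=0$ separately. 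Second, note that the paper defines $\alpha\succeq\beta$ to mean ``$\alpha\succ\beta$ or $\alpha=\beta$,'' which is a priori stronger than ``non-negative under every embedding,'' and your final sentence conflates the two. Your argument as written yields the weaker conclusion, which is all that is needed for the trace inequality actually used later; the stronger form does hold and follows from the equality case of Cauchy--Schwarz: if $u,v$ are linearly independent over $F$ then their images under each $\sigma_i$ are linearly independent over $\er$ (a nonzero $2\times2$ minor survives the embedding), so the inequality is strict in every embedding, while if they are dependent the two sides agree identically.
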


\begin{prop}[{\cite[§2 II]{schur}}]\label{prop:schur}
    Let $F$ be a totally real number field of degree $n=[F:\kve]>1$ and let $\beta\in F$. Then
    \begin{equation}
        \nonumber
        \tr_{F/\kve}\beta^2 \geq c_n\zav{\Delta_{F/\kve}(\beta)}^{2/(n^2-n)},
    \end{equation}
    where $c_n=\frac{n^2-n}{\zav{1^1\cdot2^2\cdots n^n}^{2/(n^2-n)}}$.
\end{prop}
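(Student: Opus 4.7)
The plan is to translate the statement into a purely real-analytic extremal problem on $n$-tuples of real numbers and then solve it via Lagrange multipliers, with the extremizer turning out to be a scaled copy of the roots of a Hermite polynomial.

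First, I would use that $F$ is totally real to regard the embeddings $\sigma_1,\dots,\sigma_n\colon F\to\er$ as producing real conjugates $\beta_i:=\sigma_i(\beta)$; unwinding the definitions together with the Vandermonde expression for $\Delta_{F/\kve}(1,\beta,\dots,\beta^{n-1})$ then yields
\[
    \tr_{F/\kve}\beta^2 = \sum_{i=1}^n\beta_i^2 \qquad\text{and}\qquad \Delta_{F/\kve}(\beta) = \prod_{i<j}(\beta_j-\beta_i)^2.
\]
If $\beta$ does not generate $F$ over $\kve$, some pair of $\beta_i$'s coincides, the right-hand side of the desired inequality vanishes, and there is nothing to prove. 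Assuming from here on that the $\beta_i$ are pairwise distinct, and writing $B:=\sum_i\beta_i^2$ and $D:=\prod_{i<j}(\beta_j-\beta_i)^2$, the task reduces to the real inequality $B\geq c_n\,D^{2/(n^2-n)}$.

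Both sides being homogeneous of degree two in $(\beta_1,\dots,\beta_n)$, it is enough to maximize $D$ on the sphere $\{B=1\}$. Such a maximum exists and lies in the interior of each open region $\beta_1<\cdots<\beta_n$, since $D$ is positive there, vanishes on the boundary, and the region meets the sphere in a compact set. Setting $\nabla\log D=\mu\,\nabla B$ produces the Lagrange system
\[
    \sum_{j\neq k}\frac{1}{\beta_k-\beta_j}\;=\;\mu\,\beta_k\qquad(k=1,\dots,n);
\]
the symmetric identity $\sum_k\beta_k\sum_{j\neq k}(\beta_k-\beta_j)^{-1}=\binom{n}{2}$ (obtained by pairing $(j,k)$ with $(k,j)$) then forces $\mu=n(n-1)/2$ on $\{B=1\}$. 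After the rescaling $\beta_k=\lambda h_k$ with $\lambda^2=1/(2\mu)=1/(n^2-n)$, the system becomes the classical relation $\sum_{j\neq k}(h_k-h_j)^{-1}=h_k/2$ that characterizes the zeros of the probabilists' Hermite polynomial $\mathrm{He}_n$, and which one reads off from the differential equation $\mathrm{He}_n''(x)-x\,\mathrm{He}_n'(x)+n\,\mathrm{He}_n(x)=0$ by a residue computation at $x=h_k$.

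To conclude, I would feed in the two standard invariants of $\mathrm{He}_n$. Vieta applied to $\mathrm{He}_n(x)=x^n-\binom{n}{2}x^{n-2}+\cdots$ gives $\sum_k h_k^2=n(n-1)$, which is consistent with $\lambda^2\sum_k h_k^2=B=1$, and the closed-form discriminant $\disc(\mathrm{He}_n)=1^1\cdot 2^2\cdots n^n$ yields
\[
    D_{\max}\;=\;\lambda^{n(n-1)}\disc(\mathrm{He}_n)\;=\;\frac{\prod_{k=1}^n k^k}{(n^2-n)^{n(n-1)/2}}.
\]
Raising to the power $2/(n^2-n)$ and restoring the homogeneity produces exactly the constant $c_n$ stated. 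I expect the main obstacle to be establishing the closed form $\disc(\mathrm{He}_n)=1^1\cdot 2^2\cdots n^n$; this I would prove by induction on $n$ via the three-term recurrence $\mathrm{He}_{n+1}(x)=x\,\mathrm{He}_n(x)-n\,\mathrm{He}_{n-1}(x)$ together with the standard resultant-discriminant identity for consecutive polynomials in such a recurrence.
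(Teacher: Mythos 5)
The paper never proves this proposition at all --- it is imported verbatim from Schur's 1918 paper with the citation \cite[§2 II]{schur} --- so the only comparison available is with Schur's classical argument, which your proposal essentially reconstructs. Your proof is correct in substance: the reduction to the real inequality $\sum_i\beta_i^2\geq c_n\bigl(\prod_{i<j}(\beta_j-\beta_i)^2\bigr)^{2/(n^2-n)}$ is right (including the degenerate case where $\beta$ generates a proper subfield, so that the discriminant vanishes), both sides are homogeneous of degree $2$, the Lagrange system and the multiplier $\mu=n(n-1)/2$ are computed correctly, and the constant works out: with $\lambda^2=1/(n^2-n)$ and $\disc(\mathrm{He}_n)=1^1\cdot2^2\cdots n^n$ one gets $D_{\max}=\bigl(\prod_k k^k\bigr)/(n^2-n)^{(n^2-n)/2}=c_n^{-(n^2-n)/2}$, which upon unwinding the homogeneity is exactly the stated bound (sanity checks: for $n=2$, $\disc(\mathrm{He}_2)=4$ and the inequality reads $\beta_1^2+\beta_2^2\geq\frac12(\beta_1-\beta_2)^2$; for $n=3$, $\disc(\mathrm{He}_3)=108=1^1\cdot2^2\cdot3^3$). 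Two steps should be spelled out to make this airtight. First, Lagrange multipliers give only a \emph{necessary} condition at a maximizer, so you need that every solution of $\sum_{j\neq k}(h_k-h_j)^{-1}=h_k/2$ in distinct reals \emph{is} the Hermite root set, not merely that the Hermite roots solve it; your word \uv{characterizes} asserts this, and it is true by Stieltjes' argument: putting $p(x)=\prod_k(x-h_k)$, the relations say $p''(h_k)-h_kp'(h_k)=0$ for all $k$, so the polynomial $p''-xp'+np$, which has degree at most $n-1$ (the degree-$n$ terms cancel), vanishes at $n$ distinct points and is identically zero; the monic degree-$n$ solution of this ODE is unique by the coefficient recursion, hence $p=\mathrm{He}_n$. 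Second, your induction for $\disc(\mathrm{He}_n)=\prod_k k^k$ is the standard one and goes through smoothly if you also invoke the derivative identity $\mathrm{He}_n'=n\,\mathrm{He}_{n-1}$, which converts $\disc(\mathrm{He}_n)=(-1)^{n(n-1)/2}\operatorname{Res}(\mathrm{He}_n,\mathrm{He}_n')$ into $n^n$ times a resultant of consecutive polynomials in the three-term recurrence. With these two details filled in, your argument is a complete and self-contained proof of the proposition, which the paper itself leaves entirely to the reference.
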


\begin{prop}[{\cite[III 2.10]{neukirch}}]
    \label{prp:towerdisc}
    If $\kve\subset K\subset L$ are number fields, then $\disc_K^{[L:K]}\mid\disc_L$. This may be weakened to the inequality $\abs{\disc_L}\geq\abs{\disc_K}^{[L:K]}$.
\end{prop}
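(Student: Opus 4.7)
My plan is to prove the stronger divisibility assertion via the tower formula for the relative different; the weakened inequality then falls out immediately by taking absolute values. The main obstacle will be establishing the multiplicativity of differents in a tower, which ultimately rests on a trace-duality argument.

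First I would set up the relevant objects. For a finite separable extension of rings of integers $\mathcal{O}_K \subseteq \Ol$, the codifferent is the fractional $\Ol$-ideal
\[
    \mathfrak{D}_{L/K}^{-1} = \set{x \in L : \tr_{L/K}(x\Ol) \subseteq \mathcal{O}_K},
\]
and the relative different $\mathfrak{D}_{L/K}$ is its inverse, an integral ideal of $\Ol$. The relative discriminant $\mathfrak{d}_{L/K} = N_{L/K}(\mathfrak{D}_{L/K})$ is then an integral ideal of $\mathcal{O}_K$, and in the absolute case $K = \kve$ it coincides with $(\disc_F)$ as an ideal of $\zet$.

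The heart of the argument is the tower identity
\[
    \mathfrak{D}_{L/\kve} = \mathfrak{D}_{L/K} \cdot (\mathfrak{D}_{K/\kve}\Ol)
\]
in $\Ol$, which I would obtain by unpacking the codifferent condition: by transitivity of trace, $x \in \mathfrak{D}_{L/\kve}^{-1}$ iff $\tr_{L/K}(x\Ol) \subseteq \mathfrak{D}_{K/\kve}^{-1}$, and the displayed equality drops out after inversion. Applying $N_{L/\kve}$ and invoking $N_{L/\kve} = N_{K/\kve} \circ N_{L/K}$ together with $N_{L/\kve}(\mathfrak{a}\Ol) = N_{K/\kve}(\mathfrak{a})^{[L:K]}$ for integral ideals $\mathfrak{a}$ of $\mathcal{O}_K$ yields
\[
    \mathfrak{d}_{L/\kve} = N_{K/\kve}(\mathfrak{d}_{L/K}) \cdot \mathfrak{d}_{K/\kve}^{[L:K]}.
\]

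Reading this as an equality of ideals in $\zet$ and observing that $N_{K/\kve}(\mathfrak{d}_{L/K})$ is a positive rational integer (being the norm of an integral ideal), I conclude $\disc_K^{[L:K]} \mid \disc_L$, whence $\abs{\disc_L} \geq \abs{\disc_K}^{[L:K]}$. As noted, the principal difficulty is the multiplicativity of the different; an alternative route is to localise and complete, reducing to extensions of complete discrete valuation rings where the different is generated by the derivative of a suitable minimal polynomial and the computation becomes explicit.
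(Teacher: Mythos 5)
Your argument is correct and is essentially the standard proof from the cited source: the paper itself gives no proof of this proposition, quoting it directly from Neukirch III.2.10, and the tower formula $\mathfrak{d}_{L/\kve} = N_{K/\kve}(\mathfrak{d}_{L/K})\cdot\mathfrak{d}_{K/\kve}^{[L:K]}$ obtained from the multiplicativity of the different is exactly how it is derived there. The only point worth making explicit is that the ideal-theoretic divisibility immediately gives divisibility of the integer discriminants since divisibility is insensitive to sign (and here all fields are totally real, so the discriminants are positive anyway).
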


\section{Subfields of number field composita}
\label{sec:diaggrps}

\subsection{The construction}
\label{subsec:construction}
In our goal to construct number field extensions which do not enable the existence of universal quadratic lattices of smaller rank than in the base field, it will be advantageous to restrict the structure of subfields of such an extension. Particularly, given a starting number field $L$, we will be choosing a suitable number field $K$ and considering their compositum $KL$. For each subfield $L'\subset L$, it is unavoidable that both $L'$ and $KL'$ will be subfields of $KL$, so our goal will be to choose $K$ in such a way that these are in fact the only subfields of $KL$. Stated differently, we wish for each intermediate field $\kve\subset M\subset KL$ to satisfy one of $M\subset L$ or $M\supset K$.

The natural way to examine these properties is of course by Galois correspondence, and in order to facilitate this, we will consider the Galois closures (over $\kve$) of our fields. For reference on basic Galois-theoretic results, which we will use here without a particular attribution, see e.g. \cite[Chapter VI, §1]{lang}.

First, we recall a property of discriminants of composita and in particular Galois closures.

\begin{prop}[\cite{toyama}]
    \label{prop:discradical}
    For number fields $K$, $L$, it holds that \[\disc_{KL}\mid \disc_K^{[KL:K]}\disc_L^{[KL:L]}.\]
\end{prop}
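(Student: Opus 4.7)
The plan is to prove the divisibility one rational prime at a time: for each prime $p$, I would establish
\[
v_p(\disc_{KL}) \leq [KL:K]\,v_p(\disc_K) + [KL:L]\,v_p(\disc_L),
\]
from which the desired ideal divisibility follows. Rewriting $v_p(\disc_F)=\sum_{\mathfrak{P}\mid p}f(\mathfrak{P}\mid p)\,v_\mathfrak{P}(\mathfrak{D}_{F/\kve})$ in terms of the different ideal $\mathfrak{D}$, the task reduces to controlling $\mathfrak{D}_{M/\kve_p}$ for $M=(KL)_\mathfrak{P}$, where $\mathfrak{P}$ is an arbitrary prime of $KL$ above $p$.

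The central local claim is as follows. Set $\mathfrak{p}=\mathfrak{P}\cap\mathcal{O}_K$ and $\mathfrak{q}=\mathfrak{P}\cap\mathcal{O}_L$; inside $\overline{\kve_p}$ the completion $M$ realizes the compositum $K_\mathfrak{p}\cdot L_\mathfrak{q}$. I claim
\[
\mathfrak{D}_{M/K_\mathfrak{p}} \mid \mathfrak{D}_{L_\mathfrak{q}/\kve_p}\,\mathcal{O}_M.
\]
To see this, I would exploit that every finite extension of $\kve_p$ is monogenic: pick $\alpha$ with $\mathcal{O}_{L_\mathfrak{q}}=\zet_p[\alpha]$, let $f\in\zet_p[x]$ be its minimal polynomial over $\kve_p$, and $g\in\mathcal{O}_{K_\mathfrak{p}}[x]$ its minimal polynomial over $K_\mathfrak{p}$. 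Then $M=K_\mathfrak{p}(\alpha)$ and $g\mid f$, so $g'(\alpha)\mid f'(\alpha)$ in $\mathcal{O}_M$; combined with the standard different bound $\mathfrak{D}_{M/K_\mathfrak{p}}\mid(g'(\alpha))$ and the equality $\mathfrak{D}_{L_\mathfrak{q}/\kve_p}=(f'(\alpha))$, the claim follows. Multiplicativity in the tower $\kve_p\subset K_\mathfrak{p}\subset M$ then translates into the local valuation inequality
\[
v_\mathfrak{P}(\mathfrak{D}_{M/\kve_p}) \leq e(\mathfrak{P}\mid\mathfrak{p})\,v_\mathfrak{p}(\mathfrak{D}_{K_\mathfrak{p}/\kve_p}) + e(\mathfrak{P}\mid\mathfrak{q})\,v_\mathfrak{q}(\mathfrak{D}_{L_\mathfrak{q}/\kve_p}).
\]

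Globalization is then routine book-keeping: multiply by $f(\mathfrak{P}\mid p)$, sum over $\mathfrak{P}\mid p$, and regroup the two halves of the bound according to the underlying primes $\mathfrak{q}\mid p$ and $\mathfrak{p}\mid p$. Multiplicativity of residue degrees $f(\mathfrak{P}\mid p)=f(\mathfrak{P}\mid\mathfrak{q})f(\mathfrak{q}\mid p)$ combined with the fundamental identity $\sum_{\mathfrak{P}\mid\mathfrak{q}}f(\mathfrak{P}\mid\mathfrak{q})e(\mathfrak{P}\mid\mathfrak{q})=[KL:L]$ collapses the $L$-contribution to exactly $[KL:L]\,v_p(\disc_L)$, and symmetrically for $K$. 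The step I expect to require the most care is the local different inequality $\mathfrak{D}_{M/K_\mathfrak{p}}\mid\mathfrak{D}_{L_\mathfrak{q}/\kve_p}\mathcal{O}_M$: one has to be attentive to the direction of divisibility when passing from the equality $\mathfrak{D}_{L_\mathfrak{q}/\kve_p}=(f'(\alpha))$ to the mere divisibility $\mathfrak{D}_{M/K_\mathfrak{p}}\mid(g'(\alpha))$, since $\mathcal{O}_M$ need not in general be generated over $\mathcal{O}_{K_\mathfrak{p}}$ by $\alpha$.
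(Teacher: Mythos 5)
Your argument is correct. The paper offers no proof of this proposition at all---it is imported directly from T\^oyama's note \cite{toyama}---so there is nothing internal to compare against; what you give (localize at each prime via $\disc = N(\mathfrak{D})$, use monogenicity of local fields to get $\mathfrak{D}_{M/K_{\mathfrak{p}}}\mid(g'(\alpha))\mid(f'(\alpha))=\mathfrak{D}_{L_{\mathfrak{q}}/\Q_p}\mathcal{O}_M$, then sum using multiplicativity of the different in towers and the fundamental identity $\sum_{\mathfrak{P}\mid\mathfrak{q}}e f=[KL:L]$) is essentially the classical proof of this result. You also correctly identify and handle the one delicate point, namely that only the divisibility $\mathfrak{D}_{M/K_{\mathfrak{p}}}\mid(g'(\alpha))$ is available rather than equality, which is all the argument needs.
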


\begin{corollary}\label{galclosureradical}
    Let $\tilde K$ be the Galois closure of a number field $K$. Then a prime number $p$ divides $\disc_{\tilde K}$, if and only if it divides $\disc_K$.
\end{corollary}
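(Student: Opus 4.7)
The plan is to prove the two directions separately, with each relying on one of the two discriminant results stated earlier in the preliminaries.

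For the \uv{only if} direction (suppose $p\mid \disc_K$ implies $p\mid \disc_{\tilde K}$), the argument is immediate: since $K\subset \tilde K$, Proposition~\ref{prp:towerdisc} yields $\disc_K^{[\tilde K:K]}\mid \disc_{\tilde K}$, so every prime dividing $\disc_K$ also divides $\disc_{\tilde K}$.

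For the other direction, the idea is to exploit the fact that $\tilde K$ is generated by the Galois conjugates of $K$. Concretely, write $K=K_1$, and let $K_2,\dots,K_n$ be the images of $K$ under the embeddings $K\hookrightarrow\ce$ (equivalently, under the elements of $\Gal(\tilde K/\kve)$ acting on $K$). Then $\tilde K = K_1K_2\cdots K_n$. Since each $K_i$ is isomorphic to $K$ via an embedding into $\ce$, we have $\disc_{K_i}=\disc_K$ for every $i$. Applying Proposition~\ref{prop:discradical} iteratively to the chain of composita $K_1,\,K_1K_2,\,K_1K_2K_3,\,\dots,\,\tilde K$, we see that $\disc_{\tilde K}$ divides a product of powers of the $\disc_{K_i}$, hence divides a power of $\disc_K$. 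Therefore any prime $p$ dividing $\disc_{\tilde K}$ must divide $\disc_K$.

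There is no real obstacle here -- the statement is essentially a combination of the two propositions already in hand plus the observation that conjugate fields share the same discriminant (which follows from the definition $\disc_F = \det(\sigma_i(\omega_j))^2$ since a Galois conjugation of $F$ just permutes the embeddings). The only minor care required is in justifying that an $n$-fold iteration of the two-field statement in Proposition~\ref{prop:discradical} gives divisibility by a product of discriminants, but this is a routine induction on the number of fields being composed.
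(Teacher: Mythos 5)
Your proof is correct and follows essentially the same route as the paper: the easy direction via tower divisibility of discriminants, and the converse by writing $\tilde K$ as the compositum of the conjugate fields of $K$ (which the paper realizes as the rupture fields $\kve(\alpha_i)$ of the minimal polynomial), noting they all share the discriminant of $K$, and iterating Proposition~\ref{prop:discradical}. No gaps; the iteration you flag as needing a routine induction is exactly the step the paper also takes.
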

\begin{proof}
    If $p\mid \disc_K$, then $\disc_K\mid \disc_{\tilde K}$ due to $K\subset \tilde K$ immediately implies $p\mid\disc_{\tilde K}$.

    On the other hand if $p\mid\disc_{\tilde K}$, consider an $\alpha$ such that $K=\kve(\alpha)$ and let $\alpha_1=\alpha, \alpha_2, \dots, \alpha_n$ be the roots of the minimal polynomial of $\alpha$. Clearly, $\tilde K$ is the compositum of fields $\kve(\alpha_i)$. Proposition~\ref{prop:discradical} implies that a prime may divide the discriminant of a compositum only if the divides one of the discriminants of the individual fields, so we must have $p\mid \disc_{\kve(\alpha_i)}$ for some $i$. But $\kve(\alpha_i)$ is a rupture field of the same irreducible polynomial as $\kve(\alpha)=K$, so in fact $p\mid\disc_{\kve(\alpha_i)}=\disc_K$.
\end{proof}

Now, we translate the property \uv{$M\subset L$ or $M\supset K$ for every $\kve\subset M\subset KL$} into a condition on Galois groups:

\begin{prop}\label{galtranslation}
Let $K$, $L$ be number fields such that $\disc_K$ and $\disc_L$ are coprime and let $\tilde K$, $\tilde L$ be their respective Galois closures. Then $M\subset L$ or $M\supset K$ holds for all intermediate fields $\kve\subset M\subset KL$, if and only if all intermediate groups
\[
    \Gal(\tilde K/K)\times\Gal(\tilde L/L) \leq G\leq \Gal(\tilde K/\kve)\times\Gal(\tilde L/\kve)
\]
satisfy either $G\geq \Gal(\tilde K/\kve)\times \Gal(\tilde L/L)$ or $G\leq \Gal(\tilde K/K)\times\Gal(\tilde L/\kve)$.
\end{prop}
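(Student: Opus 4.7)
The plan is to do everything inside the Galois closure of $KL$ over $\kve$, which (since $\tilde K$ and $\tilde L$ are already Galois over $\kve$) equals $\tilde K\tilde L$, and then apply Galois correspondence directly. The first and essentially only nontrivial step is to leverage the hypothesis that $\disc_K$ and $\disc_L$ are coprime in order to secure $\tilde K\cap\tilde L=\kve$. By Corollary~\ref{galclosureradical}, $\disc_{\tilde K}$ and $\disc_{\tilde L}$ share prime divisors with $\disc_K$ and $\disc_L$ respectively, so they are still coprime; then any common subfield $F=\tilde K\cap\tilde L$ satisfies $\disc_F\mid\disc_{\tilde K}$ and $\disc_F\mid\disc_{\tilde L}$ by Proposition~\ref{prp:towerdisc}, which forces $\abs{\disc_F}=1$ and hence $F=\kve$ by Minkowski's bound.

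With $\tilde K\cap\tilde L=\kve$ in hand, the restriction map gives the standard isomorphism
\[
    \Gal(\tilde K\tilde L/\kve)\xrightarrow{\ \sim\ }\Gal(\tilde K/\kve)\times\Gal(\tilde L/\kve),\qquad \phi\mapsto(\restr{\phi}{\tilde K},\restr{\phi}{\tilde L}).
\]
I would then verify the three identifications that will be used: under this isomorphism,
\[
    \Gal(\tilde K\tilde L/KL)=\Gal(\tilde K/K)\times\Gal(\tilde L/L),
\]
$\Gal(\tilde K\tilde L/L)=\Gal(\tilde K/\kve)\times\Gal(\tilde L/L)$, and $\Gal(\tilde K\tilde L/K)=\Gal(\tilde K/K)\times\Gal(\tilde L/\kve)$. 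Each of these is immediate from the definition, since a pair $(\sigma,\tau)$ fixes $K$, $L$, or both, exactly when the respective coordinate lies in the corresponding isotropy subgroup.

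The rest is just Galois correspondence on $\tilde K\tilde L/\kve$. Intermediate fields $\kve\subset M\subset KL$ correspond in an inclusion-reversing fashion with subgroups $G$ satisfying $\Gal(\tilde K/K)\times\Gal(\tilde L/L)\leq G\leq\Gal(\tilde K/\kve)\times\Gal(\tilde L/\kve)$, via $M\mapsto\Gal(\tilde K\tilde L/M)$. Under this bijection, $M\subset L$ translates to $G\supset\Gal(\tilde K\tilde L/L)=\Gal(\tilde K/\kve)\times\Gal(\tilde L/L)$, while $M\supset K$ translates to $G\subset\Gal(\tilde K\tilde L/K)=\Gal(\tilde K/K)\times\Gal(\tilde L/\kve)$. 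Putting these together yields the stated equivalence.

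The principal technical point is the opening reduction $\tilde K\cap\tilde L=\kve$; once that product structure is established, the rest of the argument is genuinely a tautology of the Galois correspondence, so I expect no further obstacles.
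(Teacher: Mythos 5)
Your proof is correct and follows essentially the same route as the paper: coprimality of discriminants (via Corollary~\ref{galclosureradical} and Minkowski) yields $\tilde K\cap\tilde L=\kve$, hence the product decomposition of $\Gal(\tilde K\tilde L/\kve)$, and the statement then falls out of the Galois correspondence. The one point of divergence is that you read the identifications of $\Gal(\tilde K\tilde L/K)$, $\Gal(\tilde K\tilde L/L)$ and $\Gal(\tilde K\tilde L/KL)$ directly off the explicit restriction isomorphism $\phi\mapsto(\restr{\phi}{\tilde K},\restr{\phi}{\tilde L})$, which is a legitimate shortcut past the paper's more laborious path (proving $\tilde K\cap K\tilde L=K$ by a degree count and reapplying the compositum formula over the base $K$).
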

\begin{proof}
By Corollary~\ref{galclosureradical}, we have $\disc_{\tilde K}$, $\disc_{\tilde L}$ coprime, implying that $\tilde K\cap \tilde L = \kve$. Because of this and $\tilde K$, $\tilde L$ being Galois, $\tilde K\tilde L$ is Galois with
\[
    \Gal(\tilde K\tilde L/\kve) \simeq \Gal(\tilde K/\kve)\times\Gal(\tilde L/\kve).
\]
More broadly, we can investigate other Galois groups within the following inclusion diagram of number fields:
\begin{diagram}[scale=2.0]
    \node[n] (Q) at (0,0){$\kve$};
    \node[n] (K) at (0,1){$K$};
    \node[n] (tK) at (0,2){$\tilde K$};
    \node[n] (L) at (1,0){$L$};
    \node[n] (tL) at (2,0){$\tilde L$};
    \node[n] (KL) at (1,1){$KL$};
    \node[n] (tKL) at (1,2){$\tilde KL$};
    \node[n] (KtL) at (2,1){$K\tilde L$};
    \node[n] (tKtL) at (2,2){$\tilde K\tilde L$};
    \node[n] (M) at (.5,.5){$M$};
    \draw
        (Q) -- (K) -- (tK) -- (tKL) -- (tKtL) -- (KtL) -- (KL) -- (tKL)
        (K) -- (KL) -- (L) -- (tL) -- (KtL)
        (Q) -- (L)
    ;
    \draw[dashed] (Q) -- (M) -- (KL);
\end{diagram}
From $\tilde K/\kve$ being Galois and $\tilde K\cap \tilde L = \tilde K\cap L = \kve$, we get that both $\tilde KL/L$ and $\tilde K\tilde L/\tilde L$ are Galois with
\[
    \Gal(\tilde K/\kve)\simeq\Gal(\tilde KL/L)\simeq\Gal(\tilde K\tilde L/\tilde L).
\]
Analogously, we obtain
\(
    \Gal(\tilde L/\kve)\simeq\Gal(K\tilde L/K)\simeq\Gal(\tilde K\tilde L/\tilde K)
\)
along with these extensions being Galois.

Next, we claim that $\tilde K\cap K\tilde L = K$. One inclusion is obvious. If we briefly denote $X:=\tilde K\cap K\tilde L$, then the equality $X=K$ is proved by bounding
\begin{align*}
[\tilde K\tilde L:\kve] &= [\tilde K(K\tilde L):X]\cdot[X:K]\cdot[K:\kve] \leq [\tilde K:X]\cdot[K\tilde L:X]\cdot[X:K]\cdot[K:\kve] =\\&= [\tilde K:\kve]\cdot[K\tilde L:X] \leq [\tilde K:\kve]\cdot[K\tilde L:K] = [\tilde K:\kve]\cdot[\tilde L:\kve] = [\tilde K\tilde L:\kve]
\end{align*}
and observing equalities must occur.
Now due to $\tilde K\cap K\tilde L = K$ and both $\tilde K/K$ and $K\tilde L/K$ being Galois, we obtain
\[
    \Gal(\tilde K\tilde L/K) \simeq \Gal(\tilde K/K)\times \Gal(K\tilde L/K) \simeq \Gal(\tilde K/K)\times \Gal(\tilde L/\kve).
\]
Analogously, $\Gal(\tilde K\tilde L/L) \simeq \Gal(\tilde K/\kve)\times\Gal(\tilde L/L)$.

Now, since $KL$ is the supremum of $K$ and $L$, by Galois correspondence, $\Gal(\tilde K\tilde L/KL)$ must now be the infimum of $\Gal(\tilde K\tilde L/K)$ and $\Gal(\tilde K\tilde L/L)$, so
\[
    \Gal(\tilde K\tilde L/K) \simeq (\Gal(\tilde K/K)\times \Gal(\tilde L/\kve))\cap (\Gal(\tilde K/\kve)\times\Gal(\tilde L/L)) = \Gal(\tilde K/K)\times\Gal(\tilde L/L).
\]
Note that this also implies $[KL:\kve] = [K:\kve]\cdot[L:\kve]$.

Finally, we apply $\Gal(\tilde K\tilde L/-)$ to the diagram
\begin{diagram}[scale=2.0]
    \node[n] (Q) at (0,0){$\kve$};
    \node[n] (K) at (0,1){$K$};
    \node[n] (L) at (1,0){$L$};
    \node[n] (KL) at (1,1){$KL$};
    \node[n] (M) at (.5,.5){$M$};
    \draw (Q) -- (K) -- (KL) -- (L) -- (Q);
    \draw[dashed] (Q) -- (M) -- (KL);
\end{diagram}
and denote $G:=\Gal(\tilde K\tilde L/M)$ to obtain the inclusion diagram of groups:
\begin{diagram}[scale=2.0]
    \node[n] (Q) at (1,1){$\Gal(\tilde K/\kve)\times\Gal(\tilde L/\kve)$};
    \node[n] (K) at (-.5,1.5){$\Gal(\tilde K/K)\times\Gal(\tilde L/\kve)$};
    \node[n] (L) at (1.5,-.5){$\Gal(\tilde K/\kve)\times\Gal(\tilde L/L)$};
    \node[n] (KL) at (0,0){$\Gal(\tilde K/K)\times\Gal(\tilde L/L)$};
    \node[n] (M) at (.5,.5){$G$};
    \draw (Q) -- (K) -- (KL) -- (L) -- (Q);
    \draw[dashed] (Q) -- (M) -- (KL);
\end{diagram}
Every $\kve \subset M\subset KL$ corresponds to a $\Gal(\tilde K/\kve)\times\Gal(\tilde L/\kve)\geq G \geq \Gal(\tilde K/K)\times\Gal(\tilde L/L)$ and vice versa. So all $M$'s satisfy $M\subset L$ or $M\supset K$, if and only if all $G$'s satisfy $G\geq \Gal(\tilde K/\kve)\times\Gal(\tilde L/\tilde L)$ or $G\leq \Gal(\tilde K/K)\times\Gal(\tilde L/\kve)$.
\end{proof}

\subsection{Diagonal subgroups in products of groups}

The result of Proposition~\ref{galtranslation} presents a purely group-theoretic question to be explored. Let us a give a name to this property of direct products of groups:
\begin{definition}
Let $S\lneq T$ and $U\lneq V$ be finite groups. Let an intermediate group $G$ with
\[
    S\times U\leq G\leq T\times V
\]
be called \emph{diagonal}, if $G\ngeq T\times U$ and $G\nleq S\times V$. If no such $G$ exists, we will say that $S\times U\leq T\times V$ \emph{admits no diagonal groups}.
\end{definition}
In this language, our interest is in finding, for a given number field $L$, such $K$ that $\Gal(\tilde K/K)\times \Gal(\tilde L/L)\leq \Gal(\tilde K/\kve)\times\Gal(\tilde L/\kve)$ admits no diagonal groups. To investigate this, we will use the following version of Goursat's lemma (cf. \cite[I Ex. 5]{lang}):
\begin{lemma}
    \label{lem:goursat}
    Let $A$ and $B$ be groups. Then there is a bijective correspondence between subgroups $G\leq A\times B$ and quintuples $(A_1,A_2,B_1,B_2,\phi)$, where $A_1$ is a normal subgroup of $A_2$ which is a subgroup of $A$, analogously $B_1$ is a normal subgroup of $B_2$ which is a subgroup of $B$, and $\phi$ is an isomorphism $A_2/A_1\to B_2/B_1$. In this correspondence, $(A_1,A_2,B_1,B_2,\phi)$ maps to
    \[
        G:= \set{(a,b)\in A_2\times B_2\mid \phi(aA_1)=bB_1}.
    \]
\end{lemma}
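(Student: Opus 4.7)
The plan is to construct explicit maps in both directions of the claimed bijection and to verify that they are mutually inverse. For the forward direction, given a subgroup $G\leq A\times B$, I would extract the quintuple using the coordinate projections $\pi_A,\pi_B$: set $A_2:=\pi_A(G)$ and $B_2:=\pi_B(G)$, together with the \emph{kernel fibres} $A_1:=\{a\in A\mid(a,1_B)\in G\}$ and $B_1:=\{b\in B\mid(1_A,b)\in G\}$. The isomorphism $\phi\colon A_2/A_1\to B_2/B_1$ would be specified by the rule $\phi(aA_1):=bB_1$ whenever $(a,b)\in G$.

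The key verifications are then routine. Normality $A_1\trianglelefteq A_2$ (and symmetrically $B_1\trianglelefteq B_2$) follows by conjugating: for $a\in A_1$ and any $(a',b')\in G$, the product $(a',b')(a,1)(a',b')^{-1}=(a'aa'^{-1},1)$ lies in $G$, so $a'aa'^{-1}\in A_1$. Well-definedness of $\phi$ on cosets follows from the fact that $(a,b),(a,b')\in G$ forces $(1,b^{-1}b')\in G$, hence $b^{-1}b'\in B_1$. The homomorphism property is inherited from the multiplication in $G$, surjectivity is built into the choice of $A_2,B_2$, and injectivity holds because $b\in B_1$ combined with $(a,b)\in G$ gives $(a,1)=(a,b)(1,b)^{-1}\in G$, hence $a\in A_1$.

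For the reverse direction, given a quintuple $(A_1,A_2,B_1,B_2,\phi)$, I would take $G$ as prescribed in the lemma and check that it is a subgroup of $A\times B$, using that $\phi$ is a homomorphism of quotient groups. Finally I would verify that the two constructions compose to the identity on each side, which amounts to unpacking definitions: the projections and kernel fibres of the $G$ built from a quintuple recover $A_2,B_2,A_1,B_1$ directly, and $\phi$ is recovered because membership $(a,b)\in G$ was set up precisely to express $\phi(aA_1)=bB_1$. I do not anticipate a genuine obstacle here; the lemma is structural bookkeeping, and the single mildly error-prone point is ensuring that $\phi$ is defined on cosets rather than on individual elements, which is taken care of by the well-definedness check above.
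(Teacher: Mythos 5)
Your proof is correct and is the standard argument for Goursat's lemma; the paper itself does not prove this statement but cites it to Lang (Algebra, Chapter I, Exercise 5). Your construction of the inverse map via the projections $\pi_A(G),\pi_B(G)$ and the kernel fibres $\set{a\mid(a,1)\in G}$, $\set{b\mid(1,b)\in G}$ is exactly the description the paper records in the remark immediately following the lemma, so your argument supplies precisely the verification the paper leaves to the reference.
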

Observe that in the situation of lemma, $A_2$, $B_2$ are the images of projections $A\times B\to A$, $A\times B\to B$ respectively restricted to $G$, whereas $\set1\times B_1$, $A_1\times\set 1$ are the kernels of these restricted projections. Further, observe that in the special case when $A_1=A_2$, $B_1=B_2$ and $\phi$ is the unique isomorphism between two trivial groups, $G$ is just $A_2\times B_2$.

Returning to our situation with intermediate groups $S\times U\leq G\leq T\times V$,
one may immediately observe that if $S$ is not a maximal subgroup of $T$, then $S\times U\leq T\times V$ admits a diagonal subgroup: if $S\lneq S'\lneq T$, then $S'\times U$ is diagonal -- in Goursat's lemma, this corresponds to the quintuple $(S',S',U,U,\phi)$. The following proposition then shows that this necessary condition of maximality is not too far off from being sufficient:

\begin{prop}\label{maxnonnormal}
Suppose that $S\lneq T$, $U\lneq V$ are groups and that $S$ is a maximal subgroup of $T$ and not a normal subgroup of $T$. Then $S\times U\leq T\times V$ admits no diagonal subgroups.
\end{prop}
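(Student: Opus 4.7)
The plan is to classify all intermediate subgroups $G$ via Goursat's lemma and show that each one either contains $T\times U$ or is contained in $S\times V$. Writing $G$ as the quintuple $(A_1,A_2,B_1,B_2,\phi)$ from Lemma~\ref{lem:goursat}, I would first unpack what the hypothesis $S\times U\leq G$ says about the parameters. For any $s\in S$, the pair $(s,1)\in G$ forces $\phi(sA_1)=B_1$, i.e.\ $sA_1=A_1$, so $S\leq A_1$; symmetrically $U\leq B_1$. Hence we land in the situation $S\leq A_1\trianglelefteq A_2\leq T$ and $U\leq B_1\trianglelefteq B_2\leq V$.

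Now the maximality of $S$ in $T$ restricts $A_1$ to equal either $S$ or $T$, and similarly pins $A_2$ down to one of $S$, $T$. I would then run through the three resulting cases. If $A_1=T$, then $A_2=T$ and the quotient $A_2/A_1$ is trivial, forcing $B_1=B_2$ and thus $G=T\times B_2$; since $U\leq B_2$, this gives $G\geq T\times U$, so $G$ is not diagonal. If $A_1=A_2=S$, the same triviality of $A_2/A_1$ yields $B_1=B_2$ and $G=S\times B_2\leq S\times V$, again not diagonal. The only remaining option, $A_1=S$ with $A_2=T$, would require $S\trianglelefteq T$, contradicting the non-normality hypothesis on $S$.

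The only slightly subtle step is the initial reduction that $S\times U\leq G$ places $S$ inside the kernel parameter $A_1$ (not merely inside the image $A_2$); once that observation is in hand, the combined maximal-and-non-normal hypothesis eliminates all cases and the argument closes by direct inspection. No separate handling of $U$ or $V$ beyond the symmetric derivation $U\leq B_1\leq B_2\leq V$ is needed, which is why the statement imposes no structural assumption on the right-hand factor.
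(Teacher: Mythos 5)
Your proposal is correct and follows essentially the same route as the paper: invoking Goursat's lemma, observing that $S\times U\leq G$ forces $S\leq A_1$, using maximality to pin $A_1,A_2$ to $\{S,T\}$, ruling out $A_1=S$, $A_2=T$ by non-normality, and concluding that the trivial quotient forces $B_1=B_2$ and $G=A_2\times B_2$. The explicit justification that $(s,1)\in G$ implies $sA_1=A_1$ is a welcome detail the paper leaves implicit.
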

\begin{proof}
Suppose that a diagonal $G$ exists and let it correspond to $(A_1,A_2,B_1,B_2,\phi)$ in Goursat's lemma. Then since $S\times U\leq G$, we must have $S\leq A_1$. By maximality of $S$ then, $A_2$ can only be $S$ or $T$. But since $S$ is not normal in $T$, it forces $A_1$, $A_2$ to coincide and to be either $S$ or $T$. This implies that also $B_1=B_2$, hence $G$ is either $S\times B_2$, which is contained in $S\times V$, or $T\times B_2$, which contains $T\times U$, a contradiction.
\end{proof}
\begin{example}
\label{ex:symmetric-groups}
Let us consider the symmetric groups $S_{k-1}\lneq S_k$ for $k\geq 3$, where we interpret $S_{k-1}$ as the subgroup of those permutations that fix the $k$-th element. Then $S_{k-1}$ is a maximal subgroup, since any permutation not fixing $k$ can be turned into any other by composing with elements of $S_{k-1}$. Further, $S_{k-1}$ is not normal e.g. due to $(1\ k)(1\ 2)(1\ k)^{-1} = (2\ k)\notin S_{k-1}$. Thus by the previous Proposition, we see that $S_{k-1}\times U \leq S_k\times V$ admits no diagonal groups for arbitrary $U\lneq V$.
\end{example}

While not constituting a full characterization, we may still extract some necessary conditions for the existence of a diagonal group in the case when $S\lneq T$ is maximal but $S$ is a normal subgroup. In that case, the index $[T:S]$ must be some prime number $p$ and $T/S$ is the cyclic group $C_p$.

\begin{prop}\label{indexdivisibility}
If $S\lneq T$ is normal and $T/S\simeq C_p$ for a prime $p$, then $S\times U\leq T\times V$ admits a diagonal group if and only if there exist $N$, $R$ such that $U\leq N\leq R\leq V$, $N$ is a normal subgroup of $R$ and $R/N\simeq C_p$. In particular, if $p\nmid [V:U]$, a diagonal group cannot exist.
\end{prop}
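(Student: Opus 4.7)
My plan is to apply Goursat's lemma (Lemma~\ref{lem:goursat}) to parameterize intermediate subgroups $S \times U \leq G \leq T \times V$ via quintuples $(A_1, A_2, B_1, B_2, \phi)$, where $A_1$ is normal in $A_2 \leq T$ and $B_1$ is normal in $B_2 \leq V$. A first observation is that the inclusion $S \times U \leq G$ forces $S \leq A_1$ and $U \leq B_1$: indeed, $(s, 1) \in G$ for any $s \in S$ requires $\phi(sA_1) = B_1$ and hence $s \in A_1$, and symmetrically for $u \in U$. Since $T/S \simeq C_p$ is simple, $S$ is a maximal subgroup of $T$, so the only candidates are $A_1, A_2 \in \{S, T\}$. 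I expect the bulk of the work to lie in a routine case analysis on which of these values actually occur.

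For the forward direction, assume $G$ is diagonal. I will rule out $A_1 = A_2 = S$ (which forces $B_1 = B_2$ and thus $G = S \times B_2 \leq S \times V$) and $A_1 = A_2 = T$ (which forces $G = T \times B_2 \geq T \times U$, since $U \leq B_1 = B_2$). The only surviving case $A_1 = S$, $A_2 = T$ then yields via $\phi$ an isomorphism $T/S \simeq B_2/B_1$, so setting $N := B_1$ and $R := B_2$ produces the desired chain $U \leq N \leq R \leq V$ with $N$ normal in $R$ and $R/N \simeq C_p$.

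For the converse, given such $N$ and $R$, I will fix any isomorphism $\phi : T/S \to R/N$ (which exists since both groups are cyclic of order $p$) and let $G$ correspond under Goursat's lemma to the quintuple $(S, T, N, R, \phi)$. The containment $S \times U \leq G$ is immediate since $U \leq N$ and $\phi$ sends the identity coset of $T/S$ to the identity coset of $R/N$. Diagonality then needs two checks: $G \ngeq T \times U$, because $(t, 1) \in G$ for every $t \in T$ would force $\phi$ to be trivial, contradicting that $\phi$ is an isomorphism of nontrivial groups; and $G \nleq S \times V$, because for any $t \in T \setminus S$ one may pick $r \in R$ with $\phi(tS) = rN$, giving $(t, r) \in G$ with first coordinate outside $S$. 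The \emph{in particular} clause follows immediately: the tower $U \leq N \leq R \leq V$ implies $p = [R:N]$ divides $[V:U]$, so the assumption $p \nmid [V:U]$ precludes the existence of such $N$ and $R$.
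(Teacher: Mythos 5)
Your proof is correct and follows essentially the same route as the paper: both apply Goursat's lemma, use normality of $S$ with $T/S\simeq C_p$ to reduce to the three cases $(A_1,A_2)\in\{(S,S),(S,T),(T,T)\}$, discard the two non-diagonal ones, and identify $(N,R)$ with $(B_1,B_2)$ in the remaining case. The paper merely compresses this by referring back to the proof of Proposition~\ref{maxnonnormal} and calling the rest a rewording of Goursat's lemma, whereas you spell out the verifications (including the converse construction) explicitly.
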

\begin{proof}
Revisiting the proof of Proposition~\ref{maxnonnormal}, we see that the only new possibility in the quintuple $(A_1,A_2,B_1,B_2,\phi)$ that would correspond to a diagonal group is $A_1=S$, $A_2=T$. After taking $N:=B_1$, $R:=B_2$, the Proposition is then just a rewording of Goursat's lemma.
\end{proof}

As the next two examples demonstrate, $p\mid [V:U]$ is often not sufficient for the existence of a diagonal $G$, but may become sufficient with some additional conditions:
\begin{example}
Consider symmetric groups $U:=S_{p-1}\leq S_p=:V$ for a prime $p\geq3$. Then $[V:U]=p$, but no satisfactory $N$ and $R$ exist: since $S_{p-1}$ is maximal, leaving $N=S_{p-1}$, $R=S_p$ as the only option, which fails because $S_{p-1}$ is not normal.
\end{example}
\begin{example}
In the following cases, $p\mid [V:U]$ alongside an additional property guarantees a diagonal group:
\begin{enumerate}
\item If $U$ is additionally a normal subgroup of $V$, we may choose $N:=U$ and find $R$ as the group corresponding to a cyclic subgroup of order $p$ guaranteed by Cauchy's theorem in $V/U$.
\item If $V$ is nilpotent, then using the normalizer property of nilpotent groups we may construct a sequence $U=N_0 \leq N_1 \cdots\leq N_m = V$ of subgroups, where every $N_{i+1}$ is the normalizer of $N_i$ in $V$; this increasing sequence of subgroups must terminate in $V$ due to $[V:U]$ being finite. Then $p\mid [V:U]$ implies $p\mid [N_{i+1}:N_i]$ for some $i$, so we may perform the construction of (i) on $N_i\leq N_{i+1}$.
\item If $U$ is a $p$-group, let $P$ be a Sylow $p$-subgroup of $V$ that contains $U$. Then we must have $p\mid [P:U]$ and we know as a $p$-group, $P$ is nilpotent, so we may perform the construction of (ii) on $U\leq P$.
\end{enumerate}
\end{example}

We may combine Propositions~\ref{maxnonnormal} and~\ref{indexdivisibility} to state that if $S\lneq T$ is maximal and simultaneously it holds that it is not normal or that $[T:S]\nmid [V:U]$, then $S\times U\leq T\times V$ admits no diagonal groups. Going back to the situation of Proposition~\ref{galtranslation} with $S\lneq T$ playing the role of $\Gal(\tilde K/K)\lneq \Gal(\tilde K/\kve)$ and $U\lneq V$ playing the role of $\Gal(\tilde L/L)\lneq\Gal(\tilde L/\kve)$, this translates to the following:
\begin{prop}\label{compositasummary}
Let $K$, $L$ be number fields of degrees $k$, $\ell$ such that $\disc_K$ and $\disc_L$ are coprime and let $\tilde K$, $\tilde L$ be their respective Galois closures. Then $[KL:\kve]=k\ell$, and if additionally
\begin{enumerate}
\item $K$ has no non-trivial subfields (i.e. no subfields but $\kve$ and $K$ itself), and
\item $K/\kve$ is not Galois or $k\nmid \ell$,
\end{enumerate}
then every intermediate field $\kve\subset M\subset KL$ satisfies either $M\subset L$ or $M\supset K$.
\end{prop}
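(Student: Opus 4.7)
The plan is to assemble the result directly from Propositions~\ref{galtranslation}, \ref{maxnonnormal} and~\ref{indexdivisibility}. The equality $[KL:\kve]=k\ell$ is already established in the course of the proof of Proposition~\ref{galtranslation}, where it was deduced from coprimality of the discriminants alone (through $\tilde K\cap\tilde L=\kve$ and the resulting direct-product description of the ambient Galois group). For the dichotomy on intermediate fields, Proposition~\ref{galtranslation} reduces the statement to showing that the inclusion $S\times U\leq T\times V$, with $S=\Gal(\tilde K/K)$, $T=\Gal(\tilde K/\kve)$, $U=\Gal(\tilde L/L)$, $V=\Gal(\tilde L/\kve)$, admits no diagonal subgroup.

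First I would translate assumption (i) into group theory: the Galois correspondence for $\tilde K/\kve$ puts subfields $\kve\subset M\subset K$ into bijection with intermediate groups $S\leq H\leq T$, so $K$ having no proper non-trivial subfields means exactly that $S$ is a maximal subgroup of $T$. At this point I would split according to whether $K/\kve$ is Galois.

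If $K/\kve$ is not Galois, then $S$ is maximal but not normal in $T$, and Proposition~\ref{maxnonnormal} applied to $S\leq T$ and $U\leq V$ immediately rules out the existence of a diagonal subgroup. If $K/\kve$ is Galois, then $\tilde K=K$ and $S=\{1\}$; maximality of the trivial subgroup means that $T=\Gal(K/\kve)$ has no proper non-trivial subgroups, forcing $T\simeq C_k$ with $k$ prime and $T/S\simeq C_k$. Proposition~\ref{indexdivisibility} then says that a diagonal subgroup can appear only when $k\mid[V:U]$; but $[V:U]=[L:\kve]=\ell$, and assumption (ii) in this Galois case is precisely $k\nmid\ell$, so again no diagonal subgroup exists. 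In either case, the conclusion of Proposition~\ref{galtranslation} provides the desired dichotomy $M\subset L$ or $M\supset K$.

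I do not foresee a real obstacle here: the only non-mechanical step is recognizing that, in the Galois case, maximality of $S=\{1\}$ together with absence of intermediate subfields forces $[K:\kve]$ to be prime, which is what allows Proposition~\ref{indexdivisibility} to be invoked with $T/S\simeq C_k$. Everything else is bookkeeping between the Galois correspondence and the two structural lemmas on diagonal subgroups.
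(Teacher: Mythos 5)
Your proof is correct and follows essentially the same route as the paper, which presents this proposition as the direct combination of Proposition~\ref{galtranslation} with the merged criterion from Propositions~\ref{maxnonnormal} and~\ref{indexdivisibility}. The details you supply (condition (i) being equivalent to maximality of $\Gal(\tilde K/K)$ via the Galois correspondence, non-normality being equivalent to $K/\kve$ not Galois, and the prime-index observation in the Galois case) are exactly the bookkeeping the paper leaves implicit.
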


\section{Extending bounds on the rank of a universal lattice}
\label{sec:main}

Now, we will show how, for a given $L$, to choose a $K$ such that the construction of Section~\ref{sec:diaggrps} yields a compositum $KL$ over which a universal quadratic lattice requires at least as high a rank as would be needed over $L$. Before the main theorem, we prepare a technical lemma concerning a quadratic space viewed over a large field and its subfield.

\begin{lemma}
	\label{lem:KLdim}
	Let $K\subset L$ be totally real number fields and consider an $L$-vector space $U$. Further, let $B: U\times U\to L$ be a symmetric positive definite $L$-bilinear form and $v_1,\dots,v_n\in U$ vectors such that $B(v_i,v_j)\in K$ for each $1\leq i,j\leq n$. Then
	\[
		\dim_K \spn_K\set{v_1,\dots,v_n} = \dim_L \spn_L\set{v_1,\dots,v_n}.
	\]
\end{lemma}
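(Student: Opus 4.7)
The plan is to prove the two dimensions are equal by showing each is bounded by the other. The inequality $\dim_K\spn_K\set{v_1,\dots,v_n}\geq \dim_L\spn_L\set{v_1,\dots,v_n}$ is immediate: any $L$-linearly independent subfamily of the $v_i$'s is a fortiori $K$-linearly independent, since a $K$-linear relation among them would be an $L$-linear relation. The real content of the lemma is the reverse inequality.

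For that, let $d=\dim_L\spn_L\set{v_1,\dots,v_n}$ and, after relabeling, assume that $v_1,\dots,v_d$ are $L$-linearly independent. I would like to show that every remaining $v_j$ (for $j>d$) lies already in $\spn_K\set{v_1,\dots,v_d}$. For each $j>d$ write
\[
    v_j=\sum_{i=1}^{d}\alpha_{j,i}v_i
\]
with $\alpha_{j,i}\in L$; the task reduces to proving that in fact $\alpha_{j,i}\in K$ for all $i$. The trick is to recover these coefficients from the (known) inner products. Pairing the above identity with $v_k$ for $k=1,\dots,d$, I get the linear system
\[
    \sum_{i=1}^{d}B(v_i,v_k)\,\alpha_{j,i}=B(v_j,v_k),\qquad k=1,\dots,d,
\]
i.e. $G\vec\alpha_j=\vec b_j$, where $G=(B(v_i,v_k))_{i,k=1}^{d}$ is the Gram matrix and $\vec b_j=(B(v_j,v_k))_{k=1}^{d}$. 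By hypothesis both $G$ and $\vec b_j$ have entries in $K$.

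The main (and essentially only) obstacle is verifying that $G$ is invertible over $K$, so that $\vec\alpha_j=G^{-1}\vec b_j$ forces $\alpha_{j,i}\in K$. For this I would invoke positive definiteness of $B$: fix any real embedding of $L$; then $B$ becomes a positive definite real bilinear form, and since $v_1,\dots,v_d$ are $L$-linearly independent (in particular $\R$-linearly independent after the embedding), the usual argument shows that $\det G$ is strictly positive in this embedding. In particular $\det G\neq0$ as an element of $K$, so $G$ is invertible over $K$. Combining this with the easy direction, the two dimensions agree, both being equal to $d$.
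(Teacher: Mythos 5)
Your proof is correct and follows essentially the same idea as the paper's: both recover the $L$-coefficients of a vector from its inner products against a basis and use positive definiteness to see that this recovery stays inside $K$ — the paper does it by running Gram--Schmidt to get an orthogonal basis lying in the $K$-span, while you invert the Gram matrix $G$ directly. The only slightly informal point is your appeal to "the usual argument" for $\det G>0$ under a real embedding; all you need is $\det G\neq0$, which follows cleanly because a nonzero $y\in L^d$ with $Gy=0$ would give a nonzero $w=\sum y_iv_i$ with $B(w,w)=y^{T}Gy=0$, contradicting positive definiteness.
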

\begin{proof}
    Without loss of generality, let us assume that $v_1,\dots,v_r$ is an $L$-basis of $V_L:=\spn_L\set{v_1,\dots,v_n}$.
    Consider the Gram-Schmidt orthogonalization process performed on $v_1,\dots,v_r$ with respect to $B$, yielding a $B$-orthogonal basis $u_1,\dots,u_r$. This means we will have the relations
    \[
        u_i = v_i - \sum_{j=1}^{i-1} \frac{B(v_i, u_j)}{B(u_j,u_j)}\cdot u_j.
    \]
    Denoting $V_K:=\spn_K\set{v_1,\dots,v_n}$, the condition $B(v_i,v_j)\in K$ implies more generally that $B(V_K,V_K)\subset K$, hence an induction on $i$ shows that $u_i\in V_K$.

    Now we claim that $u_1,\dots,u_r$, in addition to being an $L$-basis of $V_L$, is also a $K$-basis of $V_K$. Since it is linearly independent over $L$, it must be independent over $K$ as well. To prove it generates $V_K$, let $v\in V_K$ be arbitrary. Since $V_K\subset V_L$, we may express it as
    \[
        v~= \sum_{i=1}^r a_i u_i,\quad a_i\in L.
    \]
    Taking $B(-, u_t)$ on both sides for $t=1,\dots,r$, we get $a_t = \frac{B(v,u_t)}{B(u_t,u_t)}\in K$.
    Thus $u_1, \dots, u_r$ is also a $K$-basis of $V_K$, meaning $\dim_K V_K = r = \dim_L V_L$.
\end{proof}

Now we can prove the main theorem:
\begin{theorem}
	\label{thrm:main}
	Let $k$, $\ell$ be positive integers and $L$ a totally real number field of degree $\ell$. Then there exists a real constant $C=C(L,k)$ dependent only on $L$ and $k$ such that $m(KL)\geq m(L)$ for all totally real number fields $K$ of degree $k$ satisfying the following four conditions:
	\begin{enumerate}
		\item $\disc_K>C$,
		\item $\disc_K$ is coprime to $\disc_L$,
		\item $K$ has no non-trivial subfields,
		\item $K$ is not Galois or $k\nmid\ell$.
	\end{enumerate}
\end{theorem}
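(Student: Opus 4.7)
The plan is to take a universal $\mathcal{O}_{KL}$-lattice $(\Lambda, Q)$ of minimal rank $r = m(KL)$ and extract from it a universal $\Ol$-lattice of rank $\leq r$, which will yield $m(L) \leq m(KL)$ directly. Invoking Theorem~\ref{thrm:290} over $L$ produces a finite witnessing set $S \subset \Ol^+$; for each $\alpha \in S$, I pick $v_\alpha \in \Lambda$ with $Q(v_\alpha) = \alpha$, available since $\Ol^+ \subset \mathcal{O}_{KL}^+$. The target object is $\Lambda' := \sum_{\alpha \in S} \Ol v_\alpha$, which I want to show is an $\Ol$-lattice of rank $\leq r$ that represents every element of $S$ and is thus universal.

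The crux is to prove that all bilinear values $b_{\alpha\beta} := B(v_\alpha, v_\beta)$ (with $B$ the symmetric bilinear form attached to $Q$) lie in $L$, provided $\disc_K$ exceeds a constant depending only on $k$ and $L$. Fix $\alpha, \beta \in S$ and write $b = b_{\alpha\beta}$. By Proposition~\ref{compositasummary}, whose hypotheses are exactly conditions (ii)--(iv) of the theorem, the intermediate field $\kve(b) \subset KL$ satisfies either $\kve(b) \subset L$, so $b \in L$ directly, or $\kve(b) \supset K$. In the latter case let $d := [\kve(b):\kve]$, so $k \mid d$ and $k \leq d \leq k\ell$. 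Cauchy-Schwarz (Proposition~\ref{prop:cauchy}) gives $b^2 \preceq \alpha\beta$; taking $\tr_{KL/\kve}$ and using $\alpha\beta \in L$ yields
\[
    \frac{k\ell}{d}\tr_{\kve(b)/\kve}(b^2) = \tr_{KL/\kve}(b^2) \leq \tr_{KL/\kve}(\alpha\beta) = k\,\tr_{L/\kve}(\alpha\beta).
\]
On the other hand, Schur's inequality (Proposition~\ref{prop:schur}) applied to the primitive element $b$ of $\kve(b)$, combined with the tower bound $\disc_{\kve(b)} \geq \disc_K^{d/k}$ from Proposition~\ref{prp:towerdisc}, produces $\tr_{\kve(b)/\kve}(b^2) \geq c_d \disc_K^{2/(k(d-1))}$. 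Chaining the two gives an upper bound on $\disc_K$ in terms of $d$, $k$ and $\tr_{L/\kve}(\alpha\beta)$; taking $C(k, L)$ to be the maximum of these bounds over the finitely many admissible $d \in \{k, 2k, \ldots, k\ell\}$ and pairs $(\alpha, \beta) \in S \times S$ contradicts hypothesis (i) and forces $b \in L$.

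Once $b_{\alpha\beta} \in L$ for all pairs, Lemma~\ref{lem:KLdim} applied to $L \subset KL$ ensures that the $L$-span of $\{v_\alpha\}_{\alpha \in S}$ has the same dimension as its $KL$-span, and the latter is $\leq r$. Therefore $\Lambda'$ has rank $\leq r$ as an $\Ol$-lattice; for any $w = \sum a_\alpha v_\alpha \in \Lambda'$ with $a_\alpha \in \Ol$ we get $Q(w) = \sum_{\alpha,\beta} a_\alpha a_\beta b_{\alpha\beta} \in \Ol$, and total positive definiteness of $Q$ restricts to the $L$-span because every embedding of $L$ into $\er$ extends to an embedding of $KL$. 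Thus $\Lambda'$ is a totally positive universal $\Ol$-lattice by Theorem~\ref{thrm:290}, giving $m(L) \leq r = m(KL)$. The main obstacle is the middle step: Proposition~\ref{compositasummary} is a purely structural dichotomy, and converting its ``$\kve(b) \supset K$'' alternative into a quantitative contradiction requires carefully stitching together three separate inequalities (Cauchy-Schwarz, Schur, and the discriminant tower) and making them uniform over all admissible degrees $d$ as well as over the finite test set $S$.
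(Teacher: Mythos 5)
Your proposal follows essentially the same route as the paper's proof: Theorem~\ref{thrm:290} to produce a finite test set, Proposition~\ref{compositasummary} for the dichotomy on $\kve(b)$, the chain Cauchy--Schwarz / Schur / tower-of-discriminants to bound $\disc_K$ in the case $\kve(b)\supset K$, and Lemma~\ref{lem:KLdim} to cut the rank down to an $\Ol$-lattice. There is one technical gap in the middle step: your derivation of $\tr_{\kve(b)/\kve}(b^2)\geq c_d\disc_K^{2/(k(d-1))}$ implicitly uses the inequality $\Delta_{\kve(b)/\kve}(b)\geq\disc_{\kve(b)}$, which is only valid when $b$ is an algebraic integer (it rests on $\Delta_{\kve(b)/\kve}(b)$ being $[\mathcal{O}_{\kve(b)}:\Z[b]]^2\disc_{\kve(b)}$, a positive integer multiple of the field discriminant). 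But your $b=B(v_\alpha,v_\beta)=\tfrac12\left(Q(v_\alpha+v_\beta)-Q(v_\alpha)-Q(v_\beta)\right)$ is only guaranteed to lie in $\tfrac12\mathcal{O}_{KL}$, not in $\mathcal{O}_{KL}$, and for a non-integral generator the element discriminant can be strictly smaller than the field discriminant. The paper sidesteps this by working with $2B(v_\alpha,v_\beta)\in\mathcal{O}_{KL}$ throughout; since $\kve(2b)=\kve(b)$ the structural dichotomy is unaffected, and the repair costs only a bounded factor in the Cauchy--Schwarz step and a change in the constant $C$, so your argument goes through after this adjustment.
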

\begin{proof}
	By Theorem~\ref{thrm:290}, we can choose elements $a_1,\dots,a_n\in \Ol^+$ such that a totally positive $\Ol$-lattice is universal, if and only if it represents each $a_i$. From (ii)--(iv), Proposition~\ref{compositasummary} tells us that $KL$ has degree $k\ell$ and its subfields $M$ necessarily have either $M\subset L$ or $M\supset K$.

	Let $(\Lambda, Q)$ be a totally positive universal $\mathcal O_{KL}$-lattice of rank $r$. Then $Q$ must represent each $a_i$, so we can choose vectors $v_i\in \Lambda$ such that $Q(v_i)=a_i$. Let us also consider the symmetric bilinear form $B$ corresponding to $Q$ and let us denote $\frac{b_{ij}}2:=B(v_i,v_j)$. Note that
	\[
		b_{ij} = 2B(v_i,v_j) = Q(v_i+v_j)-Q(v_i)-Q(v_j)\in\mathcal O_{KL}.
	\]
	We will now show that if $b_{ij}\notin L$ for some $i$, $j$, then $\disc_K$ is bounded. For this, we consider the field $M:=\kve(b_{ij})$. If we have $b_{ij}\notin L$, then $M\not\subset L$ means $M\supset K$; let $e:=[M:K]\mid [KL:K]=\ell$. We will utilize inequalities of traces and discriminants. Since the individual $a_i$ and $b_{ij}$ are defined using $Q$ and $B$, Proposition~\ref{prop:cauchy} yields $4a_ia_j\succeq b_{ij}^2$ and thus
	\[
		\tr_{KL/\kve}(4a_ia_j)\geq\tr_{KL/\kve}b_{ij}^2 = \frac\ell e\tr_{M/\kve}b_{ij}^2.
	\]
	Next, Proposition~\ref{prop:schur} bounds $\tr_{M/\kve}b_{ij}^2\geq c_{ke}\zav{\Delta_{M/\kve}(b_{ij})}^{2/((ke)^2-ke)}$, where $c_x = \frac{x^2-x}{\zav{1^1\cdot2^2\cdots x^x}^{2/(x^2-x)}}$ only depends on $x$. Since $b_{ij}$ generates $M$ as a field extension of $\kve$ and it is an element of $\mathcal O_{KL}\cap M = \mathcal O_M$, its discriminant must be a non-zero multiple of $\disc_M$. Therefore, we bound
	\[
		\Delta_{M/\kve}(b_{ij}) \geq \disc_M \geq \disc_K^e,
	\]
	where the last inequality is Proposition~\ref{prp:towerdisc}.
	Altogether, from $b_{ij}$ not belonging to $L$ we obtain the bound
	\begin{align*}
		k\tr_{L/\kve}(4a_ia_j) &= \tr_{KL/\kve}(4a_ia_j) \geq \frac{\ell c_{ke}}e\zav{\disc_K^e}^{2/((ke)^2-ke)} = \frac{\ell c_{ke}}e \disc_K^{2/(k^2e-k)},
	\end{align*}
	which rearranges to $\disc_K\leq \zav{\frac{ke\tr_{L/\kve}(4a_ia_j)}{\ell c_{ke}}}^{\frac{k^2e-k}2}$. This depends on $i$, $j$ and on the extension degree $e\mid\ell$, both of which admit only finitely many options, so maximizing over them, we may denote $T:=\max_{1\leq i<j\leq n}\tr_{L/\kve}(4a_ia_j)$ and obtain the overall bound
	\[
		\disc_K \leq \max_{e\mid\ell} \zav{\frac{keT}{\ell c_{ke}}}^{\frac{k^2e-k}2} =: C(L,k) = C.
	\]

	This inequality means that if $\disc_K > C$, each $b_{ij}$ must necessarily belong to $L$. This will mean that
	\[
		B(v_i,v_j) = \left\{\begin{array}{ll}
			a_i, & \text{if $i=j$,}\\
			\frac{b_{ij}}2, & \text{if $i\neq j$}
		\end{array}\right\}\in L
	\]
	for all $1\leq i,j\leq n$. This allows us to use Lemma~\ref{lem:KLdim} to obtain
	\[
		r' := \dim_L\spn_L\set{v_1,\dots,v_n} = \dim_{KL}\spn_{KL}\set{v_1,\dots,v_n} \leq r,
	\]
	since $v_1,\dots,v_n$ belong to $\Lambda\subset(KL)^r$. Now we can identify $\spn_L\set{v_1,\dots,v_n}$ with $L^{r'}$ and consider
	\begin{align*}
		\Lambda' := \spn_{\Ol}\set{v_1,\dots,v_n}.
	\end{align*}
	Since $B$ takes values from $L\cap \mathcal{O}_{KL} = \Ol$ on pairs of vectors that generate $\Lambda'$ as an $\Ol$-module, it must take values from $\Ol$ on the entire $\Lambda'$. Thus when we restrict
	\[
		Q' := \restr Q{\Lambda'},
	\]
	we obtain a quadratic $\Ol$-lattice $(\Lambda',Q')$ of rank $r'$. It has to be totally positive, since it is just a restriction of the totally positive $Q$. Finally, $Q'$ represents each $a_i$, so from the choice of $a_1,\dots,a_n$, it is a universal $\Ol$-lattice. Thus we obtain
	\(
		m(L)\leq r'\leq r,
	\)
	meaning a universal $\mathcal O_{KL}$-lattice has to have rank at least $m(L)$, and so $m(KL)\geq m(L)$.
\end{proof}

\begin{corollary}
	\label{cor:liftsk}
	Let $L$ be a totally real number field of degree $\ell$ and let $k\geq3$ be an integer. Then there exist infinitely many totally real number fields $F\supset L$ of degree $k\ell$ that have $m(F)\geq m(L)$.
\end{corollary}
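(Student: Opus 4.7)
The plan is to combine Theorem~\ref{thrm:main} with Theorem~\ref{thrm:kedlaya}: the former provides the inequality $m(KL)\geq m(L)$ once we have a $K$ of degree $k$ with certain properties, and the latter furnishes infinitely many such $K$.

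Concretely, I would invoke Theorem~\ref{thrm:kedlaya} with $S$ being the set of prime divisors of $\disc_L$ and with the real constant $C=C(L,k)$ supplied by Theorem~\ref{thrm:main}. This yields infinitely many totally real number fields $K$ of degree $k$ such that $\disc_K>C$, $\disc_K$ is coprime to $\disc_L$, and $\Gal(\tilde K/\kve)\simeq S_k$. Conditions (i) and (ii) of Theorem~\ref{thrm:main} then hold immediately. For (iii), by the Galois correspondence, subfields of $K$ correspond to subgroups of $\Gal(\tilde K/\kve)\simeq S_k$ containing $\Gal(\tilde K/K)$, which can be identified with the point-stabilizer $S_{k-1}$ in the natural action of $S_k$ on $k$ elements; since $S_{k-1}$ is maximal in $S_k$, the only subfields of $K$ are $\kve$ and $K$ itself. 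For (iv), when $k\geq3$ this point-stabilizer $S_{k-1}$ is non-normal in $S_k$ (as observed in Example~\ref{ex:symmetric-groups}), so $K/\kve$ is not Galois. Hence all four hypotheses are met, and Theorem~\ref{thrm:main} yields $m(KL)\geq m(L)$. Setting $F:=KL$, this field is totally real (as a compositum of totally real fields), contains $L$, and has degree $k\ell$ by Proposition~\ref{compositasummary}.

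Finally, I would verify that the infinite collection of $K$'s produces infinitely many pairwise non-isomorphic $F$. This is essentially automatic: any given number field of degree $k\ell$ has only finitely many subfields of degree $k$, so only finitely many of our (pairwise non-isomorphic) $K$'s can embed into a single $F$. Hence the $F=KL$ must fall into infinitely many isomorphism classes. I anticipate no genuine obstacle: the analytic and group-theoretic heavy lifting has already been done in Theorem~\ref{thrm:main} and Theorem~\ref{thrm:kedlaya}, and the proof of the corollary reduces to checking that Kedlaya's construction satisfies the four hypotheses of the main theorem.
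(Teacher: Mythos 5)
Your proposal is correct and follows essentially the same route as the paper: invoke Theorem~\ref{thrm:kedlaya} to produce infinitely many suitable $K$ with $\Gal(\tilde K/\kve)\simeq S_k$, then use the maximality and non-normality of the point stabilizer $S_{k-1}$ (Example~\ref{ex:symmetric-groups}) to verify hypotheses (iii) and (iv) of Theorem~\ref{thrm:main}. The only difference is in the final bookkeeping: the paper derives infinitude of the fields $F=KL$ by a contradiction argument pushing $\disc_F\geq\disc_K$ above any prescribed bound, while you observe that a fixed $F$ has only finitely many degree-$k$ subfields; both arguments are valid.
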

\begin{proof}
	For the sake of a contradiction, suppose only finitely many such $F$ exist. Let $D$ be the maximum of their discriminants and let $C=C(L,k)$ be the constant from Theorem~\ref{thrm:main}. From Theorem~\ref{thrm:kedlaya}, there are infinitely many totally real number fields $K$ of degree $k$ such that $\disc_K>\max(C,D)$ is coprime to $\disc_L$ and whose Galois closure $\tilde K$ has $\Gal(\tilde K/\kve)\simeq S_k$.

    It is easily seen that then $\Gal(\tilde K/K)\simeq S_{k-1}$ and that this $S_{k-1}$ is embedded in $\Gal(\tilde K/\kve)\simeq S_k$ as the subgroup of permutations fixing one chosen element. From Example~\ref{ex:symmetric-groups}, we then know $\Gal(\tilde K/K)$ is a maximal subgroup of $\Gal(\tilde K/\kve)$ and that it is not normal. Thus the conditions of Theorem~\ref{thrm:main} are satisfied and $F:=KL$ has $m(F)\geq m(L)$. But clearly $\disc_F\geq \disc_K>D$, which contradicts the choice of $D$. Thus there must be an infinite amount of fields $F$ that satisfy the statement of the corollary.
\end{proof}

Using the corollary, we can recover a simpler proof of \cite[Theorem 1]{kalamain}. This is because our Theorem~\ref{thrm:main} contains fewer technical conditions compared to \cite[Theorem 4]{kalamain}, eliminating some of the need for case splitting in the following proof:

\begin{theorem}
	\label{thrm:23deg}
	Let $d$, $r$ be positive integers such that $d$ is divisible by $2$ or $3$. Then there exist infinitely many totally real number fields $F$ of degree $d$ with $m(F)\geq r$.
\end{theorem}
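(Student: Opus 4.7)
The plan is to combine Corollary~\ref{cor:liftsk} with previously established source results for small degrees, splitting cases based on the value of $d$. Since Corollary~\ref{cor:liftsk} requires the multiplier $k$ to be at least $3$, any degree where dividing by $2$ or $3$ does not produce an integer $\geq 3$ has to be handled directly via an existing construction rather than bootstrapped through the corollary.

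The base cases are handled as follows: for $d=2$, I invoke the existence of totally real quadratic fields with arbitrarily large $m(F)$ (\cite{blomer-kala,kalaquad}); for $d=3$, the analogous cubic constructions (\cite{kala-tinkova,yatsyna}); for $d=4$, the biquadratic results (\cite{cech-et-al}). For the remaining degrees divisible by $2$ or $3$ I bootstrap: if $d$ is even and $d\geq 6$, let $L$ be a totally real quadratic field with $m(L)\geq r$ (which exists by the $d=2$ case) and apply Corollary~\ref{cor:liftsk} with $k:=d/2\geq 3$, obtaining infinitely many totally real fields $F\supset L$ of degree $k\ell=d$ satisfying $m(F)\geq m(L)\geq r$; if $d$ is odd and divisible by $3$ (hence $d\geq 9$), let $L$ be a totally real cubic field with $m(L)\geq r$ and apply Corollary~\ref{cor:liftsk} with $k:=d/3\geq 3$, again producing infinitely many $F$ of degree $d$ with $m(F)\geq r$.

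Every $d$ divisible by $2$ or $3$ falls into one of these buckets, so the argument is complete. The only subtlety worth flagging is the $d=4$ case: it cannot be reached by the bootstrap from the quadratic source (which would require $k=2$, outside the range of the corollary) nor from the cubic source (since $3\nmid 4$), and this is precisely why invoking the biquadratic source is necessary. Beyond that observation, the proof amounts to verifying that the case split is exhaustive, so no serious obstacle is expected.
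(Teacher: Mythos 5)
Your proof is correct and follows essentially the same route as the paper: establish base cases in degrees $2$, $3$ and $4$ from known source results, then bootstrap every other degree divisible by $2$ or $3$ via Corollary~\ref{cor:liftsk} with $k\geq3$. The only difference is in the choice of citations -- the paper invokes the density results of \cite{kala-yatsyna-zmija} and \cite{man} for degrees $2$ and $4$, which make the required ``infinitely many fields in the base degrees'' completely explicit, whereas you cite the original constructions \cite{blomer-kala, kalaquad, cech-et-al}; either set of sources suffices.
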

\begin{proof}
	Let us say that a positive integer $d$ is \emph{suitable} if for arbitrary positive $r$, there exist infinitely many totally real number fields $F$ of degree $d$ with $m(F)\geq r$. Then \cite[Theorem 1.1]{kala-yatsyna-zmija} implies that $2$ is suitable and \cite[Theorem 1.1]{man} improves this to all powers of $2$ being suitable (these results in fact bound $m(F)$ from below by a function of $\disc_F$ for almost all quadratic and multiquadratic fields respectively). In a similar vein, $3$ is suitable by \cite[Theorem 1.5]{yatsyna} (though it may be formulated only for quadratic forms, the proof works for quadratic lattices without major alterations).

	Corollary~\ref{cor:liftsk} implies that whenever $d$ is suitable, then so is $kd$ for all $k\geq3$. Then we easily see that any multiple of $2$ or $3$ can be achieved as $kd$, $k\geq3$ with one of $d=2$, $3$ or $4$, proving the theorem.
\end{proof}
More generally, the above technique can be seen as reducing the question of whether there exist (infinitely many) totally real number fields $F$ of degree $d$ with $m(F)\geq r$ to prime degrees $d$. This however remains open for primes $\geq5$.

Lastly, it would be interesting to be able to construct totally real number fields $F$ with large $m(F)$, a given degree $d$ and additionally a given Galois structure, i.e. prescribed Galois groups $\Gal(\tilde F/\kve)$ and $\Gal(\tilde F/F)$. In this sense, the construction of Theorem~\ref{thrm:main} is quite limited, since we only obtain Galois groups that are direct products of smaller Galois groups. We should note that due to the inverse Galois problem being open, it is not certain that a prescribed $\Gal(\tilde F/\kve)$ can always be achieved at all, even without the additional demand on $m(F)$.

\end{document}